
\documentclass[12pt]{article}
\usepackage[centertags]{amsmath}
\usepackage{amsfonts}
\usepackage{amssymb}
\usepackage{amsthm}
\usepackage{newlfont}

\setcounter{MaxMatrixCols}{10}

\setlength{\textwidth}{27pc} \setlength{\textheight}{43pc}
\newcommand{\comment}[1]{}

\addtolength{\hoffset}{-2cm} \addtolength{\textwidth}{5cm}
\addtolength{\voffset}{-0.5cm} \addtolength{\textheight}{2cm}
\newtheorem{theorem}{Theorem}
\newtheorem{lemma}{Lemma}

\newtheorem{proposition}{Proposition}

\newtheorem{example}{Example}

\newtheorem{corollary}{Corollary}
\bibliographystyle{alpha}
\begin{document}

\title{A general framework for perfect simulation of long memory processes}
\author{ Emilio De Santis \\
{\small Dipartimento di Matematica }\\
{\small Sapienza Universit\`a di Roma}\\
{\small \texttt{desantis@mat.uniroma1.it}} \and Mauro Piccioni \\
{\small Dipartimento di Matematica }\\
{\small Sapienza Universit\`a di Roma}\\
{\small \texttt{piccioni@mat.uniroma1.it}}} \maketitle

\begin{abstract}
In this paper a general approach for the perfect simulation of a
stationary process with at most countable state space is outlined.
The process is specified through a kernel, prescribing the
probability of each state conditional to the whole past history. We
follow the seminal paper \cite{CFF}, where sufficient conditions for
the construction of a certain perfect simulation algorithm have been
given. We generalize this approach by defining {\it backward
coalescence times} for these kind of processes; this allows us to
construct perfect simulation algorithms under weaker conditions.
Backward coalescence times are constructed in the following ways:
(i) by taking into account some a priori knowledge about the
histories that occur; (ii) by merging the algorithm in \cite{CFF}
with the classical CFTP algorithm \cite{PW}.
\end{abstract}

\medskip

\medskip \noindent \textbf{Keywords:} Perfect simulation, Coupling, Renewal processes.

AMS classification: 60G10, 60J10.

\section{Introduction}\label{sec1}

Perfect simulation algorithms for stochastic processes have been
developed mostly for Markov chains, starting from the original CFTP
algorithm presented in the founding paper by Propp and Wilson
\cite{PW}. Later on, Foss and Tweedie \cite{FT1998} recognized the
fundamental role of the so-called stochastic recursive sequences for
perfect simulation. Murdoch and Green \cite{MG1998} constructed a
stochastic recursive sequence for perfect simulation when the
transition kernel satisfies a minorization condition, called the
gamma-coupler. Among the applications of perfect simulation in
recent years, we cite stochastic geometry (\cite{FFG}, \cite{M2001})
and random fields (\cite{HS00}, \cite{DP08}).

The fact that the main idea underlying stochastic recursive
sequences, and the gamma-coupler in particular, works beyond the
markovian case, is shown by the extension, due to Comets et al
\cite{CFF}, to processes with infinite memory. The aim of the
present work is to present some generalizations of their results.

In this paper we consider stochastic processes defined on
$\mathbf{Z}$ with values in an {alphabet} $G$, which is finite or
countable: thus realizations of these processes are two-sided
infinite {words}. The law of the process is obtained through a
transition kernel prescribing the probability that each letter of
the alphabet occurs in any given position of the word, conditional
to the whole history preceding it.

For notational convenience we use the following convention
concerning sequences with values in $G$:
whenever $m\leq n$ are elements of $\mathbf{Z}$ 
we define the word
$$
\mathbf{s}^{n}_{m}=(s_n,\ldots,s_m) \in G^{m-n+1}.
$$
 With an obvious extension we also allow $m=-\infty$ and $n =+
 \infty$. For brevity of notation we write $\mathbf{s}: =\mathbf{s}^{+\infty}_{-\infty}
 $. For $m=-\infty$ and $n$ finite the word will be called a {\it
 history}. Histories are elements of $G^{-\mathbf{N}^{\ast
}}$, where $\mathbf{N}^{\ast }$ is the set of positive integers. We
can concatenate any word $\mathbf{s}^{n}_{m}$ with a history
 $  \mathbf{w}^{m-1}_{-\infty}$, obtaining another history
 $$
    ( \mathbf{s}^{n}_{m}, \mathbf{w}^{m-1}_{-\infty}  )    =( s_n, \ldots, s_m , w_{m-1} , w_{m-2}, \ldots ).
 $$
The same notational conventions are used for sequences with values
in the interval $[0,1)$.

The set of histories $G^{-\mathbf{N}^{\ast}}$ is equipped with the
ultrametric distance
$$
\delta(\mathbf{s}^{-1}_{-\infty},\mathbf{t}^{-1}_{-\infty})=2^{-\inf
\{n:s_{-n} \neq t_{-n}\}},\,\,\,\,
\mathbf{s}^{-1}_{-\infty},\mathbf{t}^{-1}_{-\infty} \in
G^{-\mathbf{N}^{\ast}}   .
$$
 The corresponding Borel $\sigma$-algebra
coincides with the product $\sigma$-algebra, which is generated by
all cylinder sets.

Let us consider a kernel $p:G\times G^{-\mathbf{N}^{\ast
}}\rightarrow \left[ 0,1\right] $, which will be denoted by
$p(g|\mathbf{w}_{-\infty }^{-1})$. This means that, for any $g \in
G$, $p(g|\cdot)$ is a measurable function in $G^{-\mathbf{N}^{\ast
}}$ such that $\sum_{g\in G}p(g| \mathbf{w}_{-\infty }^{-1})=1$.

We say that a process $\left\{ X_{n},n\in \mathbf{Z}\right\} $ is
{\it compatible} with the kernel $p$, if for any $m\in \mathbf{Z}$
and $g\in G$
\begin{equation}
{P}\left( X_{m}=g|X_{m-i},i\in \mathbf{N}^{\ast }\right)
=p(g|\mathbf{X}_{-\infty }^{m-1}),\hbox{ \ a.s.} \label{specifica}
\end{equation}

For any $n \in \mathbf{N}_+$, the "one-dimensional" kernel $p$
induces an $(n+1)$-dimensional kernel $p^{(n+1)}:G^{n+1}\times
G^{-\mathbf{N}_+} \to [0,1]$ defined by
\begin{equation}\label{ricorsiva}
p^{(n+1)}(g_n,\ldots,g_0|\mathbf{w}_{-\infty
}^{-1})=\prod\limits_{k=0}^{n}p(g_{k}|g_{k-1},...,g_{0},\mathbf{w}
_{-\infty }^{-1}),\,\,\,  g_k\in G,\,\,\, k=0,1,\ldots,n, \,\,\,
\mathbf{w} _{-\infty }^{-1}\in G^{-\mathbf{N}_+}.
\end{equation}
If the process $\left\{ X_{n},n\in \mathbf{Z}\right\} $ is
compatible with $p$, then for any $m\in \mathbf{Z}$, $n\in
\mathbf{N}$ and any choice of $g_{k}\in G,k=0,...,n$
\begin{equation}
P\left( X_{m+k}=g_{k},k=0,...,n|X_{m-i},i\in \mathbf{N}^{\ast
}\right) =\prod\limits_{k=0}^{n}p(g_{k}|g_{k-1},...,g_{0},\mathbf{X}
_{-\infty }^{m-1})\hbox{ \ a.s.}  \label{ricorsiva2}
\end{equation}%
Starting from a kernel $p$, perfect simulation is aimed to construct
algorithms for sampling a compatible process $\left\{ X_{n},n\in
\mathbf{Z}\right\} $, giving at the same time sufficient conditions
for its uniqueness.

Processes of this type are known in the literature as random systems
with complete connections: for the foundations of their theory see
\cite{Ios}, whereas for a full account about the literature on these
processes we refer to the introduction in \cite{CFF}. In particular,
previously known uniqueness conditions were given in \cite{Be87} and
\cite{La00}.

We conclude this introduction by giving a plan of the paper. In
Section 2 we introduce general coupling functions in the context of
processes with infinite memory and define a general backward
coalescence time.  Its existence allows to deduce perfect simulation
algorithms for the unique stationary process compatible with the
kernel. At the end of the section the "maximal" coupling function is
described, as introduced in \cite{CFF}.

From the nature of the backward coalescence time used in \cite{CFF}
we abstract the notion of information depth, to which Section
\ref{sec3} is devoted. This is a stopping time, associated to each
instant and adapted to the past values of the random sources feeding
the coupling function, which bounds the amount of information needed
on the previous states in order to compute the state at that
instant. From an information depth we give a canonical way to
construct a backward coalescence time, provided it can be shown that
it is a.s. finite. As in \cite{CFF}, under slightly stronger
condition, there is also a regeneration structure, which can be
exploited to simulate the process for all positive times and not
just on a finite window.

In Section 4 we modify the information depth used in \cite{CFF} in
order to deal with examples like alternating renewal process and
more general processes with a random change of regime in the memory
of the past, for which the algorithm in \cite{CFF} is not
successful. The main ingredient for proving that our modification
works is the knowledge of the histories that could occur, once the
sources of randomness are produced backward in time.

Finally, in Section 5 we construct directly a backward coalescence
time, when the information depth in \cite{CFF} is always strictly
positive, denying the possibility of building from it a backward
coalescence time. In general, the construction of this modified
backward coalescence times needs the modification of the coupling
function as well. The construction also requires a positive
probability of coalescence in what we call the markovian regime:
under suitable conditions, a perfect simulation algorithm is
constructed by combining the algorithm in \cite{CFF} with the
classical CFTP in \cite{PW}. A class of examples in which these
conditions hold is finally discussed.

\section{Coupling functions, backward coalescence times and perfect simulation} \label{coupsec}

In this section we discuss some general issues involved in the
design of a perfect simulation algorithm for a process compatible
with a kernel $p:G\times G^{-\mathbf{N}^{\ast }}\rightarrow \left[
0,1\right]$ of the form described above. The first concept to be
introduced is that of coupling function. Despite the fact that in
all the examples presented throughout the paper the coupling
function is almost always the same, borrowed from \cite{CFF}, we
believe that it is useful to give an abstract definition. In
particular we choose to make explicit the dependence of the backward
coalescence time, which is defined afterwards, on the coupling
function. In fact, several backward coalescence times will be
discussed throughout the paper, different from the one used in
\cite{CFF}, for the same coupling function.

We first give the definition of admissible history, which is related
to the zeros of the kernel function. This concept will be useful in
the definitions and the results that follow.

We start with the definition of a {\it forbidden word} of the
alphabet $G$, recursively on the length. A letter $g$ of the
alphabet $G$ is forbidden if $p(g|\mathbf{w}^{-1}_{-\infty})=0$, for
any $\mathbf{w}^{-1}_{-\infty} \in G^{-\mathbf{N}^{*}}$. A word
$\mathbf{s}^{0}_{-n}=(s_{0},\ldots,s_{-n+1},s_{-n})$ of length $n+1$
is forbidden if either $\mathbf{s}^{-1}_{-n}$ is forbidden or
$$
p(s_0|\mathbf{w}^{-1}_{-\infty})=0,\,\,\, \forall
\mathbf{w}^{-1}_{-\infty} \in G^{-\mathbf{N}^{*}}:
\mathbf{w}^{-1}_{-n}=\mathbf{s}^{-1}_{-n}.
$$
For any $n \in \mathbf{N}_+$, we call $\mathcal{H}_n$ the set of
words that are not forbidden, of length $n$.

Next define the set of {\it admissible histories }
$$
\mathcal{H}= \{\mathbf{w}^{-1}_{-\infty} \in G^{-\mathbf{N}_+}:
\mathbf {w}^{-1}_{-n} \in \mathcal{H}_n, n \in \mathbf{N}_+\}.
$$
Since the set of histories $\mathbf{w}^{-1}_{-\infty} \in
G^{-\mathbf{N}^{*}}$ such that $\mathbf {w}^{-1}_{-n} =
\mathbf{s}^{-1}_{-n}$  for any fixed $\mathbf{s}^{-1}_{-n}\in
G^{n+1}$ is closed, the set $\mathcal{H}$, being an intersection of
sets of this form, is closed.

Now we prove that a process $\mathbf{X}$, which is compatible with
$p$, is such that $\mathbf{X}^{0}_{-\infty}$ belongs to
$\mathcal{H}$ with probability $1$. It is readily seen that it is
enough to prove that if $\mathbf{s}^{0}_{-n}$ is a forbidden word
for $p$, then
\begin{equation}\label{mania}
P(X_0=s_0,\ldots,X_{-n}=s_{-n})=0.
\end{equation}
In order to prove this, notice that by the definition of forbidden
word and (\ref{ricorsiva}) it is
$$
p^{(n+1)}(s_0,\ldots,s_{-n}|\mathbf{w}_{-\infty }^{-n-1})=0,
$$
for any $\mathbf{w}_{-\infty }^{-n-1} \in G^{-\mathbf{N}^{*}}$. By
integrating over $\mathbf{w}_{-\infty }^{-n-1}$ with respect to the
law of the process we conclude that (\ref{mania}) holds. With
exactly the same argument it is proved that
$\mathbf{X}^{m}_{-\infty}$ belongs to $\mathcal{H}$ w.p. $1$ as
well, for any $m \in \mathbf{Z}$.

In principle, the construction of the set of admissible histories
can be iterated, replacing in the above definitions
$G^{-\mathbf{N}_+}$ with $\mathcal{H}$. In this way the set of
forbidden words could be enlarged, and thus the set of admissible
histories could be reduced, and so on. We choose not to pursue this
kind of generalization, since the previous definition is adequate
for the examples which will be presented during the paper.

A { \it coupling function} $f$ for the kernel $p$ is a function $f:[
0,1) \times \mathcal{H}\rightarrow G$ such that for
    any $\mathbf{w}_{-\infty }^{-1} \in \mathcal{H}$ and
    any $g \in G$ the set
    $\{u\in [0,1):f(u|\mathbf{w}_{-\infty }^{-1})=g\}$ is a
    disjoint
    union of intervals $[c_i(g|\mathbf{w}_{-\infty }^{-1}),d_i(g |
    \mathbf{w}_{-\infty }^{-1}))$, $i \in \mathbf{N}_+$,
    of total length $p\left( g |
\mathbf{w}_{-\infty }^{-1}\right)$. For practical simulation
purposes we also assume that $c_i (g|\mathbf{w}_{-\infty }^{-1})$
and $d_i (g|\mathbf{w}_{-\infty }^{-1})$ can be computed by looking
at a finite portion of the history $ \mathbf{w}_{-\infty }^{-1}$.
This implies that $f$ is jointly measurable and for any $u \in
[0,1)$ the function $f(u|\cdot)$ is continuous in $\mathcal{H}$.


Since an interval $[c, d)$ is either empty or it has positive
length, the set of admissible histories is invariant under the
coupling function, in the sense that for any $u \in [0,1)$
    $$
\mathbf{w}_{-\infty }^{-1}\in \mathcal{H} \Rightarrow
(f(u|\mathbf{w}_{-\infty }^{-1}),\mathbf{w}_{-\infty }^{-1})  \in
\mathcal{H}.
$$

As a consequence of the definition, if $f $ is a coupling function
for $p$ and $U$ is a random variable uniformly distributed in
$[0,1)$, then
\begin{equation} \label{az}
P(f(U|\mathbf{w}_{-\infty }^{-1})=g)=p\left( g | \mathbf{w}_{-\infty
}^{-1}\right) ,\hbox{ for each }g \in G,\,\,\,\,\mathbf{w}_{-\infty
}^{-1}\in \mathcal{H}.
\end{equation}

Starting from $ f^{(1)}=f$, we define recursively $f^{\left(
n\right) }:\left[ 0,1\right) ^{n}\times \mathcal{H} \rightarrow G$
for any $n\in \mathbf{N}$, in the following natural way
\begin{equation}
f^{\left( n+1\right) }(u_{n},...,u_{0}|\mathbf{w}_{-\infty
}^{-1}):=f\left( u_{n}|f^{(n)}\left(
u_{n-1},...,u_{0}|\mathbf{w}_{-\infty }^{-1}\right)
,...,f^{(1)}(u_{0}|\mathbf{w}_{-\infty }^{-1}),\mathbf{w}_{-\infty
}^{-1}\right) . \label{coupling}
\end{equation}
Thus, whenever $U_{k},k=0,...,n$ are i.i.d. random variables with
the uniform distribution in $\left[ 0,1\right) $, the random vector
$\left( f^{\left( k+1\right) }(U_{k},...,U_{0}|\mathbf{w}_{-\infty
}^{-1}),k=0,...,n\right) $ has the law $p^{(n+1)}$ given in
(\ref{ricorsiva}), for any $\mathbf{w}_{-\infty }^{-1} \in
\mathcal{H}$. This means that through the iterations of the coupling
function it is possible to define, in the same probability space, a
family of processes evolving in forward time according to the given
kernel $p$, indexed by all admissible histories $\mathbf{w}_{-\infty
}^{-1} \in \mathcal{H}$.

For the implementation of a perfect simulation algorithm we require
a coupling function $f$ to admit a backward coalescence time, which
we are going to define.

Consider a sequence $\mathbf{U=}\left\{ U_{i},i\in
\mathbf{Z}\right\} $ of i.i.d. random variables, with uniform
distribution in the interval $\left[ 0,1\right) $, used as the
source of randomness for the construction of the processes of
interest. For any $m, n \in \mathbf{Z}$ with $m \leq n$ we define
the $\sigma$-algebra $\mathcal{F}^n_m=\sigma \left(
U_{i},i=m,...,n\right)$. For simplicity of notation we specify an
arbitrary reference admissible history $\mathbf{g}^{-1}_{-\infty}
\in \mathcal{H}$. We say that a measurable function $\tau_0 (
\mathbf{U}^0_{-\infty})$ with non positive values is a {\it backward
coalescence time} if it has the properties:

\begin{enumerate}
\item[H1.] $-\tau_0 $ is an a.s. finite stopping time w.r.t. the filtration $\{ \mathcal{F}
^0_{-n}: n \in \mathbf{N} \}$, i.e. $\{\tau_0 = -l\}\in \mathcal{F}
^0_{-l}$ for any $l \in \mathbf{N}$;
\item[H2.] if $\tau_0=-l$, then for any $\mathbf{w}_{-\infty }^{-\left( l+1\right) }\in \mathcal{H}$
\begin{equation}\label{lab}
f^{\left( l+1\right) }(U_{0},U_{-1},...,U_{-l}| \mathbf{w}_{-\infty
}^{-\left( l+1\right) })=f^{\left( l+1\right)
}(U_{0},U_{-1},...,U_{-l}| \mathbf{g}_{-\infty }^{-(l+1) }).
\end{equation}
\end{enumerate}

The meaning of this definition is that by pushing back the initial
time until $\tau_0$, which is computable by simulating, backward in
time, the sequence $\{U_0,U_{-1},\ldots\}$, the dependence of the
value of the coupling function at time $0$ on the history prior to
time $\tau_0$ vanishes: only the dependence on
$\{U_0,U_{-1},\ldots,U_{\tau_0}\}$ remains. It is readily seen that
property H2 remains true for $\tau_0>-l$.

Likewise we can repeat the same construction for any $n \in
\mathbf{Z}$, defining
\begin{equation}\label{shift}
\tau_n(\mathbf{U}^n_{-\infty}):=n+\tau_0(\mathbf{U}^n_{-\infty}),
\forall n \in \mathbf{Z}.
\end{equation}
If $\tau_0 $ is a.s. finite, $\tau_n$ is finite as well, by
translation invariance. For further use, for $m\leq n$, we also
define
\begin{equation}\label{tauemmenne}
\tau[m,n]=\inf \{\tau_m,\tau_{m+1},\ldots,\tau_{n-1},\tau_{n}\},
\,\,\, m \leq n.
\end{equation}

If $\tau_0$ is a backward coalescence time we define the process
$\mathbf{X}=\{X_n,\,\, n \in \mathbf{Z}\}$ as
\begin{equation}\label{x}
    X_n=\sum_{l \in \mathbf{N}}f^{\left( l+1\right) }(U_{n},U_{n-1},...,U_{n-l}| \mathbf{g}_{-\infty
}^{n-l -1 })\mathbf{1}_{\{    \tau_n =n-l \}}(
\mathbf{U}^n_{-\infty} ).
\end{equation}
Notice that the definition does not depend on the choice of the
reference history $\mathbf{g}^{-1}_{-\infty} \in \mathcal{H}$.
\begin{proposition}\label{perfetto}
   If $\tau_0$ is a backward coalescence time, then the process $\mathbf{X}$
    is stationary and it is the unique process compatible with the kernel $p$.
\end{proposition}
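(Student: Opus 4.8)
The plan is to establish the three assertions — stationarity, compatibility, and uniqueness — in turn, the common engine being a consistency (semigroup) property of the iterated coupling function combined with H2. First I would show that $X_n$ is a fixed measurable function of the shifted source sequence. Writing $\theta$ for the left shift on $[0,1)^{\mathbf{Z}}$, so that $(\theta \mathbf{u})_i = u_{i+1}$, the definition (\ref{shift}) gives $\tau_n(\mathbf{U}) = n + \tau_0((\theta^n \mathbf{U})^0_{-\infty})$, whence $\{\tau_n = n-l\} = \{\tau_0(\theta^n\mathbf{U}) = -l\}$. By the remark following (\ref{x}), the value $X_n$ does not depend on the admissible reference history; using H2 this lets me rewrite (\ref{x}) so that $X_n = \Phi(\theta^n\mathbf{U})$, where $\Phi$ is the single measurable map obtained from (\ref{x}) at $n=0$, namely $\Phi(\mathbf{u}^0_{-\infty}) = \sum_{l} f^{(l+1)}(u_0,\ldots,u_{-l}\mid \mathbf{g}^{-l-1}_{-\infty})\,\mathbf{1}_{\{\tau_0=-l\}}$. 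Since $\mathbf{U}$ is i.i.d. and $\theta$ preserves its law, the representation $X_n=\Phi\circ\theta^n$ makes $\{X_n\}$ stationary at once.

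The main technical step, used for both remaining assertions, is the following coalescence identity. From (\ref{coupling}) one obtains by induction a semigroup property: running the recursion for $a+b$ steps from an admissible history $\mathbf{w}$ equals running it for $a$ steps from the history produced by the first $b$ iterations out of $\mathbf{w}$. Combining this with H2 transported to time $k$ (legitimate by translation invariance), I would prove that for every admissible $\mathbf{w}$ placed at time $-N-1$ and every $k$ with $\tau_k \geq -N$, the forward value $f^{(k+N+1)}(U_k,\ldots,U_{-N}\mid\mathbf{w})$ equals $X_k$ and is independent of $\mathbf{w}$. Letting $N\to\infty$, using the a.s. admissibility of realized histories established earlier and the assumed continuity of $f(u\mid\cdot)$ on $\mathcal{H}$, this yields the forward recursion $X_n = f(U_n\mid\mathbf{X}^{n-1}_{-\infty})$ a.s. Because $X_{n-1},X_{n-2},\ldots$ are functions of $\{U_i:i\leq n-1\}$, the variable $U_n$ is independent of $\mathbf{X}^{n-1}_{-\infty}$, so (\ref{az}) gives $P(X_n=g\mid\mathbf{X}^{n-1}_{-\infty}) = p(g\mid\mathbf{X}^{n-1}_{-\infty})$, which is exactly (\ref{specifica}); hence $\mathbf{X}$ is compatible with $p$.

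For uniqueness let $\mathbf{Y}$ be any process compatible with $p$ and fix a window $[m,n]$. Its history $\mathbf{W}:=\mathbf{Y}^{-N-1}_{-\infty}$ is admissible a.s., and by (\ref{ricorsiva2}) the conditional law of $(Y_m,\ldots,Y_n)$ given $\mathbf{W}$ is the kernel law $p^{(\cdot)}(\cdot\mid\mathbf{W})$. On an enlarged space carrying $\mathbf{U}$ together with an independent copy of $\mathbf{W}$, I would realize this conditional law through the coupling function by setting $\tilde Y_k = f^{(k+N+1)}(U_k,\ldots,U_{-N}\mid\mathbf{W})$; since iterating $f$ reproduces $p^{(\cdot)}$, the block $\tilde Y_m,\ldots,\tilde Y_n$ has the same distribution as $Y_m,\ldots,Y_n$. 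By the coalescence identity of the previous paragraph, on the event $\{-\tau[m,n]\leq N\}$ one has $\tilde Y_k = X_k$ for all $k\in[m,n]$, irrespective of $\mathbf{W}$. Hence the total variation distance between the laws of $(Y_m,\ldots,Y_n)$ and $(X_m,\ldots,X_n)$ is at most $P(-\tau[m,n]>N)$, which tends to $0$ as $N\to\infty$ because $\tau[m,n]$ is a.s. finite by (\ref{tauemmenne}) and H1. Thus all finite-dimensional distributions of $\mathbf{Y}$ and $\mathbf{X}$ coincide, giving uniqueness.

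I expect the coalescence identity of the second paragraph to be the main obstacle: one must formulate the semigroup property of the $f^{(n)}$ precisely, check that H2 (stated only at time $0$) transfers to every time $k$, and control the passage to the infinite conditioning history via the finite-range/continuity assumption on $f$. Once this identity is secured, the forward recursion (compatibility) and the coupling-from-the-past comparison (uniqueness) follow with only bookkeeping, while stationarity is immediate from $X_n=\Phi\circ\theta^n$.
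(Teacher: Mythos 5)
Your argument is correct and follows essentially the same route as the paper: stationarity by construction, compatibility by propagating H2 to all depths and all times and then passing to the limit via the continuity of $f(u|\cdot)$ and the closedness of $\mathcal{H}$ (the paper packages exactly this as the nested sets $I_{n+1}$ shrinking to $\mathbf{X}^{0}_{-\infty}$ in the ultrametric), and uniqueness by the coupling/total-variation bound $P(-\tau[m,n]>N)\to 0$ taken from \cite{CFF}. The one point to phrase carefully is that the admissibility of $\mathbf{X}^{n-1}_{-\infty}$ must be produced by your limit argument itself (limits of admissible histories are admissible because $\mathcal{H}$ is closed), rather than invoked from the ``established earlier'' fact about compatible processes, which would be circular before compatibility is proved.
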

\begin{proof}
 The stationarity of $\mathbf{X}$ is guaranteed by construction. Let
us proceed to prove that it is compatible with $p$. By stationarity
it is enough to prove (\ref{specifica}) for $m=0$. Given the
realization $\mathbf{U}^{0}_{-\infty}$ define the non empty random
subsets of $\mathcal{H}$
\begin{equation*}
\begin{array}{c}
  I_{n+1}(U_0,U_{-1},\ldots,U_{-n})=   \\
      \{(f^{\left( n+1\right)
}(U_{0},...,U_{-n}|\mathbf{w}_{-\infty }^{-(n+1)}),\ldots, f^{\left(
1\right) }(U_{-n}|\mathbf{w}_{-\infty
}^{-(n+1)}),\mathbf{w}_{-\infty }^{-(n+1)}):\mathbf{w}_{-\infty
}^{-(n+1)}\in \mathcal{H} \},\\
\end{array}
\end{equation*}
for $n \in \mathbf{N}$, made of admissible histories in
$\mathcal{H}$, obtained by varying in all possible ways the initial
history prior to time $-n$, and then applying the coupling function
with the fixed values $U_{-n}, \ldots, U_0$, until time zero.

The sequence $I_{n+1}(U_0,U_{-1},\ldots,U_{-n})$ is non increasing
in $n \in \mathbf{N}$. Moreover each element in
$I_{-\tau[-n,0]+1}(U_0,U_{-1},\ldots,U_{\tau[-n,0]})$ has
ultrametric distance from $\mathbf {X}^{0}_{-\infty}$ which does not
exceed $2^{-n}$. Therefore $\mathbf {X}^{0}_{-\infty}$ belongs to
the closure of
$$
I_1(U_0)=\{(f(U_0|\mathbf {w}^{-1}_{-\infty}),\mathbf
{w}^{-1}_{-\infty}): \mathbf{w}_{-\infty }^{-1}\in \mathcal{H}\}.
$$
But the continuity of $f(u|\cdot)$ implies that $I_1(U_0)$ is
closed, hence $\mathbf {X}^{0}_{-\infty} \in I_1(U_0)$, meaning that
$\mathbf{X}^{-1}_{-\infty} \in \mathcal{H}$ and
$$
X_0=f(U_0|\mathbf{X}^{-1}_{-\infty}).
$$
Since $U_0$ is independent of $\cal{F}^{-1}_{-\infty}$ and $X_{-n}$
is measurable w.r.t. this $\sigma$-algebra, for any $n \in
\mathbf{N}_+$, this implies that $\mathbf{X}$ satisfies
(\ref{specifica}) with $m=0$. The proof of uniqueness is essentially
the same as in \cite{CFF} p. 935. It is a consequence of the fact
that the tail probability $P( \tau[0,n] \leq  -i )$, gives an upper
bound on the variation distance between two distributions of the
form (\ref{ricorsiva}) indexed by any two initial histories in
$\mathcal{H}$ which differ only before time $-i$. By a.s. finiteness
of $\tau[0,n]$, this tail probability goes to zero as $ i \to
\infty$.
\end{proof}

The construction of the process $\mathbf{X}$ yields a perfect
simulation algorithm on a finite window $[m,n]$, obtained first by a
backward inspection of the sequence $(U_{n-i}, i \in \mathbf{N})$ in
order to locate the stopping time $\tau[m,n]$ defined in
(\ref{tauemmenne}), and then by a recursive computation of the
coupling function started from the reference initial history
$\mathbf{g}^{\tau[m,n]-1}_{-\infty}$. In general we cannot say that
all the intermediate values of $\mathbf{X}$ prior to time $m$ are
identified during this computation, but this will happen for the
kind of backward coalescence times considered in the next section.

Mimicking the proof of Proposition \ref{perfetto} we can construct
the process $\mathbf{X}$ also when the set of admissible histories
$\mathcal{H}$ is replaced by a possibly smaller subset
$\mathcal{H}'$ in the definition of the coupling function $f$ and
the backward coalescence time $\tau_0$.

\begin{proposition}\label{confetto}
Suppose that:
\begin{itemize}
    \item [1.] $\mathcal{H}'$ is invariant under the coupling function
    and under the cut operator sending $\mathbf{w}^{-1}_{-\infty}$
    into $\mathbf{w}^{-2}_{-\infty}$;
    \item [2.] for any $u \in [0,1)$, the function
    $f(u|\cdot )$ is continuous in
    $ \mathcal{H}'$;
    \item [3.] $\mathbf{X}\in \mathcal{H}'$ with probability $1$;
    \item [4.] any $P$ compatible with the kernel $p$ gives probability
    $1$ to $\mathcal{H}'$.
\end{itemize}
    Then, if $\tau_0$ satisfies the assumptions H1 and H2 (with $\mathcal{H}$
    replaced by $\mathcal{H}'$), the process $\mathbf{X}$
    is stationary and it is the unique process compatible with the kernel $p$.
\end{proposition}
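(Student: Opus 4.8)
The plan is to follow the proof of Proposition \ref{perfetto} line by line, isolating each place where a property of $\mathcal{H}$ was invoked and replacing it by the matching hypothesis on $\mathcal{H}'$. Stationarity of $\mathbf{X}$ is again immediate from the shift-covariant construction (\ref{x})--(\ref{shift}), so the real work is confined to compatibility and uniqueness. First I would set up the nested random sets $I_{n+1}(U_0,\ldots,U_{-n})$ exactly as before, but with the histories $\mathbf{w}^{-(n+1)}_{-\infty}$ now ranging over $\mathcal{H}'$ instead of $\mathcal{H}$. Well-definedness (each listed sequence is again admissible) and the monotonicity $I_{n+2}\subseteq I_{n+1}$ rely only on invariance of $\mathcal{H}'$ under the coupling function, which is the first part of hypothesis 1; prepending one value of $f^{(1)}$ and then reading the recursion (\ref{coupling}) identifies an element of $I_{n+2}$ with one of $I_{n+1}$. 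The second part of hypothesis 1, invariance under the cut operator, guarantees that the truncated reference histories $\mathbf{g}^{n-l-1}_{-\infty}$ occurring in (\ref{x}) stay in $\mathcal{H}'$, so that the construction of $\mathbf{X}$ itself makes sense within $\mathcal{H}'$.

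Next I would reproduce the coalescence estimate: using property H2 stated over $\mathcal{H}'$, together with the definition of $\tau[-n,0]$ in (\ref{tauemmenne}), every element of $I_{-\tau[-n,0]+1}$ agrees with $\mathbf{X}^0_{-\infty}$ in the coordinates $0,-1,\ldots,-n$, hence lies within ultrametric distance $2^{-n}$ of it. Letting $n\to\infty$ places $\mathbf{X}^0_{-\infty}$ in the closure of $I_1(U_0)$. Here lies the one genuine modification. In Proposition \ref{perfetto} the conclusion $\mathbf{X}^{-1}_{-\infty}\in\mathcal{H}$ came for free because $\mathcal{H}$ had been shown to be closed; since I do not assume $\mathcal{H}'$ closed, I would instead invoke hypothesis 3 to assert directly that $\mathbf{X}^{-1}_{-\infty}\in\mathcal{H}'$ almost surely, and then use hypothesis 2: choosing histories $\mathbf{w}^{(k)}\in\mathcal{H}'$ with $\mathbf{w}^{(k)}\to\mathbf{X}^{-1}_{-\infty}$ and $f(U_0|\mathbf{w}^{(k)})\to X_0$, continuity of $f(U_0|\cdot)$ on $\mathcal{H}'$ forces $X_0=f(U_0|\mathbf{X}^{-1}_{-\infty})$. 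The independence of $U_0$ from $\mathcal{F}^{-1}_{-\infty}$, against which $\mathbf{X}^{-1}_{-\infty}$ is measurable, then yields (\ref{specifica}) for $m=0$ and, by stationarity, compatibility with $p$.

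For uniqueness I would copy the argument of \cite{CFF} used in Proposition \ref{perfetto}: the tail $P(\tau[0,n]\le -i)$ bounds the total variation between the laws of $\mathbf{X}^n_0$ generated from any two initial histories agreeing after time $-i$, and this tail vanishes as $i\to\infty$ by a.s.\ finiteness of $\tau[0,n]$. The point requiring hypothesis 4 is that this coupling is available only for histories lying in $\mathcal{H}'$, where both $f$ and property H2 are defined; hypothesis 4 guarantees that every process compatible with $p$ concentrates on $\mathcal{H}'$, so the bound applies to the conditional laws of an arbitrary compatible process, not merely to the constructed $\mathbf{X}$. The cut-invariance in hypothesis 1 again ensures that the truncated initial segments entering this comparison remain admissible.

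I expect the main obstacle to be precisely the replacement of the closedness of $\mathcal{H}$: one must check that hypotheses 2 and 3 jointly deliver the membership $\mathbf{X}^{-1}_{-\infty}\in\mathcal{H}'$ and the functional identity $X_0=f(U_0|\mathbf{X}^{-1}_{-\infty})$ that closedness previously supplied automatically, and that hypothesis 4 is genuinely needed to promote uniqueness from the constructed process to every compatible one. Everything else is a transcription of the earlier proof with $\mathcal{H}$ replaced by $\mathcal{H}'$.
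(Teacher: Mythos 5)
Your proposal is correct and is essentially the proof the paper intends: the paper itself only says to ``mimic'' the proof of Proposition \ref{perfetto}, and you carry out that mimicry faithfully, correctly locating where each of the four hypotheses enters (invariance for the well-definedness and nesting of the sets $I_{n+1}$, hypotheses 2 and 3 jointly replacing the closedness of $\mathcal{H}$ in the step $X_0=f(U_0|\mathbf{X}^{-1}_{-\infty})$, and hypothesis 4 for extending the uniqueness coupling bound to an arbitrary compatible law).
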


Finally we present the construction of the "maximal" coupling
function introduced in \cite{CFF}, modified by taking into account
only the trajectories in the admissible set of histories
$\mathcal{H}$. In order to present this coupling function some
relevant quantities have to be defined. First define $a_{k}:G\times
\mathcal{H}\rightarrow \left[ 0,1\right] $ as
\begin{equation*}
a_{0}(g)=\inf \left\{
p(g|\mathbf{z}^{-1}_{-\infty}):\mathbf{z}^{-1}_{-\infty}\in
\mathcal{H} \right \}, \,\,\,\, g \in G ,
\end{equation*}
\begin{equation}\label{conv1}
a_{k}(g|\mathbf{w}_{-k}^{-1})=\inf \left\{
p(g|\mathbf{z}^{-1}_{-\infty}):\mathbf{z}^{-1}_{-\infty}\in
\mathcal{H}, \mathbf{z}^{-1}_{-k}=\mathbf{w}^{-1}_{-k} \right\} ,
\,\,\,\, g \in G , \,\,\,\, \mathbf{w}_{-\infty}^{-1}
 \in \mathcal{H} ,
\end{equation}
and the increments $b_{k}:G\times \mathcal{H}\rightarrow \left[ 0,1%
\right] $ defined as
\begin{equation*}
b_{k}\left( g|\mathbf{w}_{-k}^{-1}\right)
=a_{k}(g|\mathbf{w}_{-k}^{-1})-a_{k-1}(g|\mathbf{w}_{-\left(
k-1\right) }^{-1}),
\end{equation*}
for any $k\in \mathbf{N}$, with $a_{-1} \equiv 0$ .

In order to define the maximal coupling function we need to assume
that, for any $g \in G$, the function $\mathbf{w}^{-1}_{-\infty} \in
\mathcal{H} \mapsto p(g|\mathbf{w}^{-1}_{-k})$ is continuous, i.e.
\begin{equation}\label{converge}
\sum_{k=1}^n b_k(g|\mathbf{w}^{-1}_{-k})=a_n(g,\mathbf
{w}^{-1}_{-n})\uparrow p(g|\mathbf{w}_{-\infty}^{-1}), \,\,\,
\forall g \in G, \,\,\,\mathbf{w}_{-\infty}^{-1} \in \mathcal{H}.
\end{equation}
Also define, for any $\mathbf{w}_{-\infty}^{-1} \in \mathcal{H}$ and
$k \in \mathbf{N}$
\begin{equation}\label{antonio}
a_{k}\left( \mathbf{w}_{-k}^{-1}\right) : =\sum_{g\in G}a_{k}\left(
g|\mathbf{w}_{-k}^{-1}\right).
\end{equation}
It is easily proved that (\ref{converge}) is equivalent to
\begin{equation}\label{noia}
a_{k}\left( \mathbf{w}_{-k}^{-1}\right)\uparrow 1,\,\,\,  \forall
\mathbf{w}^{-1}_{-\infty}\in \mathcal{H}.
\end{equation}
Next we partition the interval $[a_{k-1}( \mathbf{w}_{-k+1}^{-1}),
a_{k}( \mathbf{w}_{-k}^{-1}))$  in subintervals
$B_k(g|\mathbf{w}_{-k}^{-1})$ of length $b_{k}\left(
g|\mathbf{w}_{-k}^{-1}\right)$ (if this value is positive), varying
$g \in G$, for any $k \in \mathbf{N}$: in the union
$\cup_{k=0}^{\infty}B_k(g|\mathbf{w}_{-k}^{-1})$  the function
$f(u|\mathbf{w}^{-1}_{-\infty})$ takes the value $g$. Any function
of this form will be called a {\it maximal coupling function}.

\comment{The following proposition justifies the terminology.
\begin{proposition}
A maximal coupling function is a coupling function.
\end{proposition}
\begin{proof}
It is enough to prove that for any $g \in G$ and $0<u<1$ the set
$$
A(g,u)=\{(v,\mathbf{w}^{-1}_{-\infty})\in [0,1) \times
\mathcal{H}:v\leq u, f(v|\mathbf{w}^{-1}_{-\infty})=g\}
$$
is measurable in $[0,1)\times \mathcal{H}$. Define the function
$\theta_u: \mathcal{H} \rightarrow \mathbf{N}$ as
$$
\theta_u(\mathbf{w}^{-1}_{-\infty})=\inf
\{k:a_k(\mathbf{w}^{-1}_{-k}) > u\}.
$$
This function has finite values on the whole $\mathcal{H}$, because
of the property (\ref{noia}). As a consequence it is enough to show
that, for any $h \in \mathbf{N}$, the set
$$
\{(v,\mathbf{w}^{-1}_{-\infty})\in [0,1) \times \mathcal{H}:v\leq u,
f(v|\mathbf{w}^{-1}_{-\infty})=g,
\theta_u(\mathbf{w}^{-1}_{-\infty})=h\}
$$
is measurable. For this we partition the set of
$\mathbf{w}^{-1}_{-\infty}\in \mathcal{H}$ such that
$\theta_u(\mathbf{w}^{-1}_{-\infty})=h$ in balls of radius $2^{-h}$
according to the word $\mathbf{w}^{-1}_{-h}$. Since the set of these
words is at most countable, it is enough to show the measurability
of
$$
\{(v,\mathbf{w}^{-1}_{-\infty})\in [0,1) \times \mathcal{H}:v\leq u,
f(v|\mathbf{w}^{-1}_{-\infty})=g,
\theta_u(\mathbf{w}^{-1}_{-\infty})=h, \mathbf{w}^{-1}_{-h}=
\mathbf{s}^{-1}_{-h}\}.
$$
Let $ \mathbf{v}^{-1}_{-\infty} \in \mathcal{H} $ be any history in
the projection of this set on the second component. Then the above
set is equal to the cartesian product of the union of intervals
$B_j(g|\mathbf{v}^{-1}_{-j})$, for $j=0,1,\ldots,h$, and the ball in
$\mathcal{H}$ of radius $2^{-h}$  centered in
$\mathbf{v}^{-1}_{-\infty}$.

The continuity of $f(u|\mathbf{w}^{-1}_{-\infty})$ in
$\mathbf{w}^{-1}_{-\infty} \in \mathcal{H}$, for any fixed $u \in
[0,1)$, is due to the fact that this function is constant in a ball
of radius smaller than $2^{-\theta_u(\mathbf{w}^{-1}_{-\infty})}$
around $\mathbf{w}^{-1}_{-\infty}$. This ends the proof.
\end{proof}}

Also in the definition of the maximal coupling function it is
possible to replace the set $\mathcal{H}$ with a smaller set
$\mathcal{H}'$, provided the assumptions of Proposition
\ref{confetto} are satisfied. We will see a particular example in
the sequel.

\comment{ ========================================================
SI RICOMINCIA DA TRE or equivalently
\begin{equation*}
\sum_{k=0}^{\infty }b_{k}\left( g|\mathbf{w}_{-k}^{-1}\right) =p(g|
\mathbf{w}_{-\infty}^{-1}  ) .
\end{equation*}

The maximal coupling function $f(u|\mathbf{w}_{-\infty }^{-1})$ is
defined to have the value $g$ whenever $u$ lies in a disjoint union
of intervals of length $b_{0}\left( g\right) $ (of type $0$),
$b_{1}\left( g|w_{-1}\right) $ (of type $1$) ,\dots,$b_{k}\left(
g|\mathbf{w}_{-k}^{-1}\right) $ (of type $k$) ,..., respectively,
for any $g\in G$. For any construction of this kind (\ref{az}) is
satisfied, hence $f$ is a coupling function for the kernel $p$. To
complete the definition it remains to arrange these intervals in
$\left[0,1 \right) $ in a convenient way. This is realized by first
pasting together the intervals of type $0$, in the following way.
First order the elements of $G$ arbitrarily, say $G=\{g_i, i\in
\mathbf{N^*}\}$. Then partition the interval $[0,a_0)$, with
$$
a_0=\sum_{j=1}^{\infty}a_0(g_j),
$$
into the sub-intervals $B_0(g_1)=[0,a_0(g_1))$ and
\begin{equation}\label{interv}
B_0(g_i)=[\sum_{j=1}^{i-1}a_0(g_j), \sum_{j=1}^{i}a_0(g_j)),\,\,\,
i=2,\ldots        .
\end{equation}
The maximal coupling function $f(u|\mathbf{w}_{-\infty }^{-1})$ is
defined to have the value $g_i$, when $u \in B_0(g_i)$, for $i \in
\mathbf{N}_+$, irrespectively of $\mathbf{w}_{-\infty }^{-1}\in
G^{-\mathbf{N}_+}$. The same scheme is used for defining
$f(u|\mathbf{w}_{-\infty }^{-1})$ on the intervals of type
$1,2,\ldots$, hence on all the intervals $ [a_{k-1}(
\mathbf{w}_{-(k-1)}^{-1}), a_{k}( \mathbf{w}_{-k}^{-1})) $, for
$k\in \mathbf{N} $ (where $a_{-1}=0$). Its main property is that
\begin{equation}\label{anton2}
    u <a_{k}\left( \mathbf{w}_{-k}^{-1}\right) \Rightarrow  f(u|\mathbf{w}^{-1}_{-\infty}
    ) =
    f(u|\mathbf{w}_{-k }^{-1}, \mathbf{z}^{-(k +1)}_{-\infty} ) ,
    \,\,\, \forall  \mathbf{z}^{-(k +1)}_{-\infty} \in G^{ -
\mathbf{N}_+} ,
\end{equation}
which implies that $f(u|\cdot)$ is not only closed but also
continuous, for any $u \in [0,1)$, as required for a coupling
function.
===============================================================================================
}

\section{Backward coalescence times constructed from information depths} \label{sec3}

In this section we present a particular class of backward
coalescence times specified through a two-stage procedure. This
concept is inspired by the particular construction of the backward
coalescence time presented in \cite{CFF}.


An {\it information depth} $K_0=K_0(\mathbf{U}^{0}_{-\infty})$ for
the coupling function $f$ is an a.s. finite stopping time w.r.t. the
filtration $\{\mathcal{F}^0_{-n}; n \in \mathbf{N} \}$ with the
property that $K_0= m$ implies
\begin{equation*}\begin{array}{c}
f^{(m+1)} ( U_0 , U_{-1} , \dots , U_{-m}|\mathbf{w}^{-(m+1)}_{-
\infty} ) \\ =f ( U_{0}|f^{(m)}( U_{-1},...,U_{-m}|
\mathbf{w}^{-(m+1)}_{- \infty} ) ,...,f^{(1)}(U_{-m}|
\mathbf{w}^{-(m+1)}_{- \infty}),\mathbf{w}^{-(m+1)}_{- \infty})
                             \end{array}
\end{equation*}
\begin{equation}\label{Kappa}
= f ( U_{0}|f^{(m)}( U_{-1},...,U_{-m}| \mathbf{w}^{-(m+1)}_{-
\infty} ) ,...,f^{(1)}(U_{-m}| \mathbf{w}^{-(m+1)}_{-
\infty}),\mathbf{g}^{-(m+1)}_{- \infty}),
\end{equation}
for any $m \in \mathbf{N}$ and any $\mathbf{w}^{-(m+1)}_{-
\infty}\in \mathcal{H}$, such that
\begin{equation}\label{nonso}
(f^{(m)}( U_{-1},...,U_{-m}| \mathbf{w}^{-(m+1)}_{- \infty} )
,...,f^{(1)}(U_{-m}| \mathbf{w}^{-(m+1)}_{-
\infty}),\mathbf{g}^{-(m+1)}_{- \infty}) \in \mathcal{H}.
\end{equation}
It is checked that when this is fulfilled it remains true for any $m
> K_0$. In fact the set of equalities (\ref{Kappa}) which have to be
checked, for a fixed $\mathbf{w}^{-1}_{- \infty} \in \mathcal{H}$,
is reduced as $m$ grows. We recall that
$\mathbf{g}^{-1}_{-\infty}\in \mathcal{H}$ is arbitrary, so if $m
\geq K_0$, the dependence of the coupling function $f^{(m+1)} (U_0,
U_{-1},\dots,U_{-m}|\mathbf{w}^{-(m+1)}_{- \infty} )$, on the
history $\mathbf{w}^{-(m+1)}_{- \infty} \in \mathcal{H}$ prior to
time $-m$, is due only to the states computed in the subsequent
interval $[-m,-1]$. For $m=0$ property (\ref{Kappa}) means that
$f(U_0|\mathbf{w}^{-1}_{- \infty})$ is constant w.r.t.
$\mathbf{w}^{-1}_{- \infty}\in \mathcal{H}$.

By comparing (\ref{Kappa}) with (\ref{lab}) it is seen that $K_0$ is
not necessarily the negative of a backward coalescence time. In
general, to eliminate completely the dependence on
$\mathbf{w}^{-(m+1)}_{- \infty} \in \mathcal{H}$ in (\ref{Kappa}), a
larger value of $m$ has to be expected. In order to construct a
backward coalescence time we define the sequence
\begin{equation} \label{ruo}
\mathbf{K}= \{K_j = K_0 ( \mathbf{U}^{j}_{-\infty}) ,j\in
\mathbf{Z}\},
\end{equation}
of information depths at all times. Next introduce the random
variable
\begin{equation}\label{ruo2}
\tau_0^{\mathbf{K}} (\mathbf{U}^0_{-\infty})=\sup \left\{ s \leq
0:K_{j}\leq j-s, s\leq j \leq 0 \right\}.
\end{equation}
Notice that, differently from $K_0$, $\tau_0^{\mathbf{K}}$ takes
negative values: indeed, by definition $\tau_0^{\mathbf{K}}\leq
-K_0$. The random variable $\tau_0^{\mathbf{K}}$ is a candidate for
a backward coalescence time; in fact the following result holds.
\begin{proposition}\label{acca}
If $\tau_0^{\mathbf{K}}$ is a.s. finite it satisfies properties H1
and H2.
\end{proposition}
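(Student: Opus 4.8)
The plan is to read off from the definition (\ref{ruo2}) the combinatorial event $\{\tau_0^{\mathbf{K}}=-l\}$, check it sits at the right level of the filtration, and then run a forward induction over the window $[-l,0]$ driven by the defining property (\ref{Kappa}) of the information depth. Throughout, for $s\le 0$ I write $A_s$ for the event $\{K_j\le j-s,\ s\le j\le 0\}$, so that $\tau_0^{\mathbf{K}}=\sup\{s\le 0:A_s\}$. Since $\tau_0^{\mathbf{K}}$ is assumed a.s.\ finite, the set $\{s\le 0:A_s\}$ is a.s.\ a nonempty set of integers bounded above by $0$, hence attains its supremum; thus on $\{\tau_0^{\mathbf{K}}=-l\}$ the event $A_{-l}$ holds, i.e.\ $K_j\le j+l$ for every $j\in[-l,0]$. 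This inequality is the only consequence of $\tau_0^{\mathbf{K}}=-l$ that the proof of H2 will use.

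For H1 I would argue measurability event by event. Because each $K_j=K_0(\mathbf{U}^j_{-\infty})$ is a stopping time for the backward filtration, $\{K_j=m\}\in\mathcal{F}^j_{j-m}$, and therefore for $-l\le s\le 0$
\[
\{K_j\le j-s\}=\bigcup_{m=0}^{j-s}\{K_j=m\}\in\mathcal{F}^j_{s}\subseteq\mathcal{F}^0_{-l}\qquad(s\le j\le 0).
\]
Hence $A_s\in\mathcal{F}^0_{-l}$ for every $s\in[-l,0]$, and writing $\{\tau_0^{\mathbf{K}}=-l\}=A_{-l}\cap\bigcap_{s=-l+1}^{0}A_s^{\,c}$ as a finite Boolean combination of such events gives $\{\tau_0^{\mathbf{K}}=-l\}\in\mathcal{F}^0_{-l}$, which together with the assumed finiteness is exactly H1.

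The core of the argument is H2. Fix $l$ with $\tau_0^{\mathbf{K}}=-l$ and, for an admissible base $\mathbf{w}^{-(l+1)}_{-\infty}$, set $Y_j^{\mathbf{w}}=f^{(j+l+1)}(U_j,U_{j-1},\ldots,U_{-l}\,|\,\mathbf{w}^{-(l+1)}_{-\infty})$ for $j\in[-l,0]$, and likewise $Y_j^{\mathbf{g}}$ for the reference base. I would prove by induction on $j$ from $-l$ to $0$ that $Y_j^{\mathbf{w}}=Y_j^{\mathbf{g}}$ for every $\mathbf{w}\in\mathcal{H}$; the case $j=0$ is precisely the identity (\ref{lab}) required in H2. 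For the inductive step the hypothesis gives $Y_i^{\mathbf{w}}=Y_i^{\mathbf{g}}$ for all $i<j$; since $A_{-l}$ yields $K_j\le j+l$, I apply the information depth property (\ref{Kappa}) at time $j$ with the value $m=j+l$ (admissible because (\ref{Kappa}) persists for every $m\ge K_j$), obtaining
\[
Y_j^{\mathbf{w}}=f\!\left(U_j\,|\,Y_{j-1}^{\mathbf{w}},\ldots,Y_{-l}^{\mathbf{w}},\mathbf{g}^{-(l+1)}_{-\infty}\right)=f\!\left(U_j\,|\,Y_{j-1}^{\mathbf{g}},\ldots,Y_{-l}^{\mathbf{g}},\mathbf{g}^{-(l+1)}_{-\infty}\right)=Y_j^{\mathbf{g}},
\]
where the first equality is (\ref{Kappa}), the middle one is the inductive hypothesis, and the last is the recursive definition (\ref{coupling}) of the forward computation with base $\mathbf{g}$.

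The step needing most care, and the main obstacle, is the admissibility proviso (\ref{nonso}): (\ref{Kappa}) may be invoked only when the intermediate states produced from $\mathbf{w}$, concatenated with $\mathbf{g}^{-(l+1)}_{-\infty}$, lie in $\mathcal{H}$. Here the inductive hypothesis rewrites that concatenation as $(Y_{j-1}^{\mathbf{g}},\ldots,Y_{-l}^{\mathbf{g}},\mathbf{g}^{-(l+1)}_{-\infty})$, which is exactly the $\mathbf{g}$-trajectory up to time $j-1$ and hence belongs to $\mathcal{H}$ by the stated invariance of $\mathcal{H}$ under the coupling function. Beyond this, the remaining delicacy is purely index bookkeeping (aligning $j-m=-l$, and the base case $j=-l$, where $A_{-l}$ forces $K_{-l}=0$ and (\ref{Kappa}) reduces to the constancy of $f(U_{-l}\,|\,\cdot)$ noted after its definition); notably, no continuity or measurability property of $f$ is needed for H2, in contrast with Proposition \ref{perfetto}.
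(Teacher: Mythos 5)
Your proof is correct and follows essentially the same route as the paper: the event $A_{-l}$ you isolate coincides with the paper's $F_l$, and your forward induction on $j\in[-l,0]$ using (\ref{Kappa}) at depth $m=j+l$ is the same induction the paper carries out to reach (\ref{libra}). Your explicit treatment of the admissibility proviso (\ref{nonso}) via the invariance of $\mathcal{H}$ under the coupling function is a welcome detail that the paper's proof leaves implicit, but it does not change the argument.
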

\begin{proof} Let us observe that, for any $m \in \mathbf{N}$,
\begin{equation} \label{cappa}
\{-\tau_0^{\mathbf{K}}\leq m\}=\cup_{i=0}^m F_i,
\end{equation}
where
\begin{equation}\label{nome}
F_i=\{K_{-i}=0, K_{-i+1}\leq 1, \ldots, K_{-1}\leq i-1,K_0 \leq i\}.
\end{equation}
Since $F_i \in \mathcal{F}^0_{-i}$, H1 is proved.

Now assume that $F_i$ is realized, for some $i \in \mathbf{N}$. From
$K_{-i}=0$ it is obtained that $F_i$ implies
\begin{equation}\label{primopasso}
f(U_{-i}|\mathbf{w}^{-(i+1)}_{-\infty})=f(U_{-i}|\mathbf{g}^{-(i+1)}_{-\infty}),
\end{equation}
for any $\mathbf{w}^{-(i+1)}_{-\infty} \in \mathcal{H}$, and thus
$$
f^{(2)}(U_{-i+1},U_{-i}|\mathbf{g}^{-(i+1)}_{-\infty})=f(U_{-i+1}|f(U_{-i}|\mathbf{g}^{-(i+1)}_{-\infty}),
\mathbf{g}^{-(i+1)}_{-\infty})=f(U_{-i+1}|f(U_{-i}|\mathbf{w}^{-(i+1)}_{-\infty}),
\mathbf{g}^{-(i+1)}_{-\infty})
$$
for any $\mathbf{w}^{-1}_{-\infty} \in \mathcal{H}$. Using
$K_{-i+1}\leq 1$ and (\ref{Kappa}) we obtain that
$$
f(U_{-i+1}|f(U_{-i}|\mathbf{w}^{-(i+1)}_{-\infty}),
\mathbf{g}^{-(i+1)}_{-\infty})=f(U_{-i+1}|f(U_{-i}|\mathbf{w}^{-(i+1)}_{-\infty}),
\mathbf{w}^{-(i+1)}_{-\infty})=f^{(2)}(U_{-i+1},U_{-i}|\mathbf{w}^{-(i+1)}_{-\infty}).
$$
By induction, using the same argument, it is obtained that
\begin{equation}\label{libra}
f^{(i)}(U_0,\ldots,U_{-i+1},U_{-i}|\mathbf{w}^{-(i+1)}_{-\infty})=f^{(i)}(U_0,\ldots,
U_{-i+1},U_{-i}|\mathbf{g}^{-(i+1)}_{-\infty}),
\end{equation}
for any $\mathbf{w}^{-(i+1)}_{-\infty} \in \mathcal{H}$. Since
$-\tau_0^{\mathbf{K}}\leq m$ means that $F_i$ is realized for some
$i \in [0,m]$ and the property (\ref{libra}) is preserved for values
larger than $i$, we get that H2 is fulfilled, too.
\end{proof}

\comment{
========================================================== which is
a stopping time w.r.t. the filtration $\{\mathcal{F}^0_{-m}; m \in
\mathbf{N} \}$, in such a way that (\ref{cappa}) is equivalent to
$\tau_0\geq -m$.

Now suppose that $\tau_0 $ is a.s. finite. Then, for any $ m \in
\mathbf{N}$ , it is possible to define the random variable
\begin{equation}\label{zorro}
h_{m+1}(U_0, \ldots, U_{-m}) = f^{(m+1)} ( U_0, \ldots, U_{-m}|
\mathbf{z}^{-(m+1)}_{-\infty}) \mathbf{1}_{\{    \tau_0  \geq -m
\}}(U_0, \ldots, U_{-m})
\end{equation}
where $\mathbf{z}^{-1}_{-\infty} $ is any given element of  $ G^{-
\mathbf{N}_+  } $. Notice that $\tau _0$ being adapted, the
indicator of $\tau_0 \geq -m$ is actually a function of $U_0,
\ldots, U_{-m}$. From the argument following (\ref{cappa}) we have
$$
m \geq -\tau_0 \Rightarrow
h_{m+1}(U_0,...,U_{-m})=h_{-\tau_0+1}(U_0,...,U_{-\tau_0}).
$$
If we define the sequence $\tau_n, n \in \mathbf{N}$ as
$$
\tau_n(\mathbf{U})=n+\tau_0(\mathbf{U}^n_{-\infty}), \forall n \in
\mathbf{Z},
$$
we get
\begin{equation}\label{tauenne}
\tau_n (\mathbf{U})=\sup \left\{ s \leq n:K_{j}\leq j-s, s\leq j
\leq n \right\}.
\end{equation}
Notice that the computation of the state of the process at time $n$
does not require to know the states before time $\tau_n$. We are
finally in a position to define the mapping
$\Phi:[0,1)^{\mathbf{Z}}\rightarrow G^{\mathbf{Z}}$ (almost surely)

\begin{proposition}\label{perfetto}
    Under the assumption that $\tau_0$ is almost surely finite, the process $\mathbf{X}=(X_n,n\in \mathbf{Z})$
    is the unique process compatible with the kernel $p$ (in law).
    Moreover it is stationary.
\end{proposition}
\begin{proof}
The fact that $\mathbf{X}$ is stationary is ensured by construction.
In order to prove compatibility with the kernel $p $ we observe
that, due to the definition of $\tau_0$

$$
X_0 = h_{-\tau_0 +1}   ( U_0 , \ldots, U_{\tau_0 })=f^{(-\tau_0+1)}
( U_0, \ldots, U_{-\tau_0}| \mathbf{z}^{-(\tau_0+1)}_{-\infty})
$$
\begin{equation}\label{point}
=f ( U_{0}|f^{(-\tau_0)}( U_{-1},...,U_{-\tau_0}|
\mathbf{z}^{-(\tau_0+1)}_{- \infty} ) ,...,f^{(1)}(U_{-\tau_0}|
\mathbf{z}^{-(\tau_0+1)}_{- \infty}),\mathbf{z}^{-(\tau_0+1)}_{-
\infty})
\end{equation}
By definition of $\tau_0$ it is
$$
j-K_j\geq \tau_0,\,\,\, \forall j \in [\tau_0,i]
$$
for any $i \in [\tau_0,-1]$. This means that $\tau_0$ belongs to the
set over which the supremum defining $\tau_i$ is taken, and
consequently $\tau_i\geq \tau_0$, for any $i \in [\tau_0,-1]$.
Therefore
\begin{equation}\label{algun}
X_i=h_{i-\tau_i+1}(U_i,...,U_{\tau_i
})=h_{i-\tau_0+1}(U_i,...,U_{\tau_0
})=f^{(i-\tau_0+1)}(U_i,\ldots,U_{\tau_0}|v^{-(\tau_0+1)}_{-\infty})
\end{equation}
so that we can rewrite (\ref{point}) as
$$
X_0=f(U_0|X_{-1},...,X_{-\tau_0},\mathbf{z}^{-(\tau_0+1)}_{-
\infty}).
$$
Conditionally to $X_{-i}=x_{-i}, i \in \mathbf{N^*}$ we thus have
$$
X_0=f(U_0|x_{-1},...,x_{-\tau_0},\mathbf{z}^{-(\tau_0+1)}_{-
\infty})
$$
where $U_0$ is uniformly distributed in $(0,1)$. Since the r.h.s. of
the above expression is unchanged if we take
$\mathbf{z}^{-(\tau_0+1)}_{- \infty}=\mathbf{x}^{-(\tau_0+1)}_{-
\infty}$, we have that
$$
X_0=f(U_0|x_{-1},...,x_{-\tau_0},x_{-\tau_0-1},\ldots)
$$
and since $f$ is a coupling function for $p$, this ends the proof of
compatibility.
\end{proof}
==============================================}

\comment{  =========================================

Finally we prove by induction that
\begin{equation}\label{indu}
f^{(i-\tau_0+1)}(U_i,\ldots,U_{\tau_0}|\mathbf{w}^{-(\tau_0+1)}_{-
\infty})=h_{i-\tau_0+1}(U_i,...,U_{\tau_0 })=X_i
\end{equation}
for any $i \in [\tau_0,-1]$. In fact for $i=\tau_0$, taking into
account the fact that $K_{\tau_0}=0$ and $\tau_{\tau_0}={\tau_0}$ we
have indeed that
\begin{equation}\label{initial}
f(U_{\tau_0}|\mathbf{w}^{-(\tau_0+1)}_{-
\infty})=h_1(U_{\tau_0})=X_1.
\end{equation}
Moreover suppose that
(\ref{indu}) holds for $i \in [\tau_0,j]$, with $j \leq -2$. Then we
prove that it holds for $i=j+1$. This reduces to establish that
\begin{equation}\label{ricorsi}
X_{j+1}=h_{j-\tau_0+2} (U_{j+1},\ldots, U_{\tau_0} )\\
= f ( U_{j+1}|h_{j-\tau_0+1}( U_{j},...,U_{\tau_0} )
,...,h^{1}(U_{\tau_0}))=f(U_{j+1}|X_{j},...,X_{\tau_0})
\end{equation}
which is equivalent to
$$
f^{(j-\tau_0+2)} (U_{j+1},\ldots, U_{\tau_0}|\mathbf{w}^{\tau_0-1}_{- \infty} )\\
= f ( U_{j+1}|f^{(j-\tau_0+1)}( U_{j},...,U_{\tau_0}|
\mathbf{w}^{\tau_0-1}_{- \infty} ) ,...,f^{(1)}(U_{\tau_0}|
\mathbf{w}^{\tau_0-1}_{- \infty}),\mathbf{w}^{\tau_0-1}_{- \infty}),
$$
which holds true when $K_{j+1}\leq j-\tau_0+1$, equivalently
$j+1-K_{j+1}\geq \tau_0$, which is guaranteed by the definition of
$\tau_0$.
=============================================}
Under the assumption of Proposition \ref{acca} we can define the
process $\mathbf{X}$ through (\ref{lab}) and (\ref{x}), with the
shifted backward coalescence times defined by
\begin{equation}\label{zaq}
   \tau_n=\tau_n^{\mathbf{K}}=\sup \{s \leq n: K_j\leq j-s, s \leq j
\leq n \},
\end{equation}
for any $n \in \mathbf{Z}$. Since $m \in [\tau_n^{\mathbf{K}} ,n ]$
implies $ \tau_m^{\mathbf{K}} \geq \tau_n^{\mathbf{K}}$, by starting
the forward simulation from time $\tau^{\mathbf{K}}_n$, it is
possible to recover all the values $X_m$, with
$m=\tau^{\mathbf{K}}_n,\ldots,n$, through the iteration of the
coupling function with the (arbitrary) initial history $\mathbf
{g}^{\tau^{\mathbf{K}}_n-1}_{-\infty}$.

\comment{it is apparent the structure of the simulation algorithm.
It consists in scanning backward in time the uniform random
variables $ U_n , U_{n-1}, \ldots $ until the stopping time $
\tau_n$. Then $X_n = h_{n -\tau_n +1} (U_n , \ldots, U_{\tau_n})$ is
computed from the following recursion
\begin{equation}\label{recurs1}
 X_i = f ( U_i | X_{i-1}, \ldots , X_{\tau_n} ,  \mathbf{ v
}_{-\infty}^{ \tau_n -1} ), \,\,\, i =  \tau_n+1 , \ldots , n ,
\end{equation}
started from
\begin{equation}\label{recurs2}
X_{\tau_n} = f ( U_{\tau_n}| \mathbf{ v }_{-\infty}^{ \tau_n -1})
\end{equation}
where $ \mathbf{z}$ is arbitrary. If the window $ X_m , \ldots ,
X_n$ needs to be simulated the same algorithm has to be started from
the stopping time $ \min \{ \tau_m , \ldots, \tau_n\}$. Notice that
the property (\ref{algun}) has to be used to avoid double
computations of the same states. ===========}

Next we introduce a property which is stronger  than the a.s.
finiteness of $\tau_0^{\mathbf{K}}$, but easier to verify; this is
again suggested by \cite{CFF}. In order to introduce it, define the
event
$$
R^{\mathbf{K}}=\{\tau_n^{\mathbf{K}} \geq 0, \,\,\, \forall n \in
\mathbf{N}\}=\{\tau^{\mathbf{K}}[0,\infty]=0\},
$$
where $\tau^{\mathbf{K}}[0,\infty] = \inf \{ \tau^{\mathbf{K}}_i : i
\geq 0\}$. $R^{\mathbf{K}}$ belongs to the $\sigma$-algebra
$\mathcal{F} ^{+\infty}_0=\sigma(\cup_{n \in \mathbb{N}}\mathcal{F}
^n_0)$. When $R^{\mathbf{K}}$ is realized the iteration of the
coupling function, started at time $0$ from the arbitrarily chosen
history $\mathbf{g}^{-1}_{- \infty}\in \mathcal{H}$, produces $X_n$
for any $n \in \mathbf{N}$.

It is convenient to rephrase the event $R^{\mathbf{K}}$ in terms of
the information depths $\{K_n, n \in \mathbf{N}\}$. This is done in
the following simple lemma, which is essentially borrowed from
\cite{CFF}.
\begin{lemma}\label{sigaretta}
The following identity holds
\begin{equation}\label{taucappa}
  R^{\mathbf{K}}=\{K_n \leq n, n \in \mathbf{N}\}.
\end{equation}
\end{lemma}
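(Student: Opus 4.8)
The plan is to prove the two inclusions separately, unwinding the definition (\ref{zaq}) of $\tau_n^{\mathbf{K}}$ as a supremum over admissible shifts. Recall that $\tau_n^{\mathbf{K}} = \sup\{s \leq n : K_j \leq j - s, \, s \leq j \leq n\}$ and that $R^{\mathbf{K}} = \{\tau_n^{\mathbf{K}} \geq 0, \, \forall n \in \mathbf{N}\}$. Since the set appearing in the supremum consists of integers bounded above by $n$, whenever this supremum is $\geq 0$ it is actually attained, so on $R^{\mathbf{K}}$ the value $\tau_n^{\mathbf{K}}$ is itself an admissible shift; I would record this observation at the outset, as it is what makes the second inclusion work.

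For the inclusion $\{K_n \leq n, \, \forall n\} \subseteq R^{\mathbf{K}}$, I would fix $n \in \mathbf{N}$ and simply exhibit $s = 0$ as an admissible shift. The hypothesis $K_j \leq j$ for $0 \leq j \leq n$ is precisely the condition $K_j \leq j - 0$ over the range $0 \leq j \leq n$, so $0$ belongs to the set whose supremum defines $\tau_n^{\mathbf{K}}$, whence $\tau_n^{\mathbf{K}} \geq 0$. As $n$ is arbitrary, the event $R^{\mathbf{K}}$ holds.

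For the reverse inclusion $R^{\mathbf{K}} \subseteq \{K_n \leq n, \, \forall n\}$, I would again fix $n$ and use that on $R^{\mathbf{K}}$ the shift $s^{\ast} := \tau_n^{\mathbf{K}}$ is admissible with $0 \leq s^{\ast} \leq n$. The key point is that the right endpoint $j = n$ always lies in the range $s^{\ast} \leq j \leq n$, so admissibility of $s^{\ast}$ applied at $j = n$ gives $K_n \leq n - s^{\ast} \leq n$, using $s^{\ast} \geq 0$. Letting $n$ vary yields $K_n \leq n$ for every $n$, which is the desired conclusion.

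The argument is elementary and I do not expect a serious obstacle, but the one point deserving care is the asymmetry between the two directions. Admissibility of a shift $s^{\ast}$ only controls $K_j$ for $j \geq s^{\ast}$, so from $\tau_n^{\mathbf{K}} \geq 0$ one cannot in general recover $K_j \leq j$ for the lower indices $0 \leq j < s^{\ast}$; the correct reading is that each single inequality $\tau_n^{\mathbf{K}} \geq 0$ is exploited only to extract the single fact $K_n \leq n$ at the top index, and it is the quantification over all $n \in \mathbf{N}$ that reassembles these pointwise facts into the full family $\{K_n \leq n : n \in \mathbf{N}\}$.
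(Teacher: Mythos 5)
Your argument is correct and follows essentially the same route as the paper: the forward inclusion is obtained by taking $s=0$ as an admissible shift in the supremum defining $\tau_n^{\mathbf{K}}$, and the reverse inclusion by evaluating the admissibility condition of the attained supremum $\tau_n^{\mathbf{K}}$ at the top index $j=n$. Your explicit remarks that the supremum over integers bounded above is attained, and that only the $j=n$ instance is used in the reverse direction, are accurate refinements of what the paper states more tersely.
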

\begin{proof}
First we prove that $R^{\mathbf{K}}$ is included in the r.h.s. of
(\ref{taucappa}). Observe that, for any $n \in \mathbf{N}$, it is
$K_j\leq j-\tau_n^{\mathbf{K}}$ for $j \in [\tau_n^{\mathbf{K}},n]$;
in particular, for $j=n$, $K_n \leq n-\tau_n^{\mathbf{K}}$. This
does not exceed $n$ provided $\tau_n^{\mathbf{K}}\geq 0$, which
proves the promised inclusion. For the converse inclusion the
argument is the following. If $K_n \leq n$ for any $n \in
\mathbf{N}$, then it is seen from (\ref{zaq}) that $0$ belongs to
the set whose supremum is precisely $\tau_n^{\mathbf{K}}$. Thus
$\tau_n^{\mathbf{K}}\geq 0$, for any $n \in \mathbf{N}$.
\end{proof}
\begin{proposition}
\label{rinnovo} If $P (R^{\mathbf{K}})>0$ then
$\tau^{\mathbf{K}}[0,\infty]$ is finite a.s.
\end{proposition}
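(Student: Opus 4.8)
The plan is to derive the result from the stationarity and ergodicity of the driving sequence $\mathbf{U}$, rewriting $R^{\mathbf{K}}$ via Lemma~\ref{sigaretta} as $R^{\mathbf{K}}=\{K_n\le n,\ n\in\mathbf{N}\}$, an event with $P(R^{\mathbf{K}})>0$ by hypothesis. First I would introduce, for each $k\in\mathbf{N}$, the re-centered event
$$
S_k=\{K_m\le m+k,\ \forall\, m\ge -k\}.
$$
Since $K_m=K_0(\mathbf{U}^m_{-\infty})$ is a stationary functional of $\mathbf{U}$, the event $S_k$ is exactly $R^{\mathbf{K}}$ with the origin moved to $-k$; formally $S_k=\theta^{-k}R^{\mathbf{K}}$ for the shift $\theta$ acting on $[0,1)^{\mathbf{Z}}$, so that $P(S_k)=P(R^{\mathbf{K}})>0$ for every $k$.

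The next, deterministic, step is to show $S_k\subseteq\{\tau^{\mathbf{K}}[0,\infty]\ge -k\}$. Assume $S_k$ holds and fix any $n\in\mathbf{N}$. The index $s=-k$ satisfies $s\le n$ and $K_j\le j+k=j-s$ for all $j\in[-k,n]$; hence $-k$ lies in the set whose supremum defines $\tau_n^{\mathbf{K}}$ in~(\ref{zaq}), so $\tau_n^{\mathbf{K}}\ge -k$. As $n$ was arbitrary, $\tau^{\mathbf{K}}[0,\infty]=\inf_{n\ge 0}\tau_n^{\mathbf{K}}\ge -k$ on $S_k$, and therefore
$$
\{\tau^{\mathbf{K}}[0,\infty]>-\infty\}\ \supseteq\ \bigcup_{k\in\mathbf{N}}S_k .
$$

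It then suffices to prove $P(\bigcup_k S_k)=1$, and this is where the hypothesis $P(R^{\mathbf{K}})>0$ is genuinely used. Because $\mathbf{U}$ is i.i.d., $\theta$ is measure preserving and ergodic and $\mathbf{1}_{S_k}=\mathbf{1}_{R^{\mathbf{K}}}\circ\theta^{-k}$; Birkhoff's ergodic theorem then gives $\frac1N\sum_{k=0}^{N-1}\mathbf{1}_{S_k}\to P(R^{\mathbf{K}})>0$ almost surely, so infinitely many of the $S_k$ occur a.s. and in particular $\bigcup_k S_k$ has full measure. (Equivalently, $\bigcup_k S_k$ is $\theta$-invariant up to a null set and contains $R^{\mathbf{K}}$, hence has probability one by ergodicity.) Combined with the inclusion above, this shows that $\tau^{\mathbf{K}}[0,\infty]$ is a.s. finite.

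The middle step is routine bookkeeping, but the delicate point there is its uniformity in $n$: the single index $-k$ must simultaneously qualify for every $\tau_n^{\mathbf{K}}$, $n\ge 0$, and it is this uniformity that upgrades a coordinatewise bound into a lower bound on the infimum $\tau^{\mathbf{K}}[0,\infty]$. The real obstacle, however, is the final passage from positive to full probability: it cannot be obtained from stationarity alone and requires invoking ergodicity of the source, together with the identification of each $S_k$ as a shift of $R^{\mathbf{K}}$.
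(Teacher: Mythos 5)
Your proof is correct and follows essentially the same route as the paper: you define the shifted events $S_k=\theta^{-k}R^{\mathbf{K}}$ (the paper's $R_{-k}^{\mathbf{K}}$), use the ergodic theorem to conclude that some $S_k$ occurs a.s., and then observe deterministically that on $S_k$ one has $\tau^{\mathbf{K}}[0,\infty]\ge -k$. The only difference is presentational: you spell out the Birkhoff averaging and the uniformity-in-$n$ bookkeeping that the paper compresses into the line $0\geq\tau^{\mathbf{K}}[0,\infty]\geq\tau^{\mathbf{K}}[Y_0,\infty]=Y_0$.
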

\begin{proof} Define the sequence of events $\{R_m^{\mathbf{K}}, m \in
\mathbf{Z}\}$ as
$$
R_m^{\mathbf{K}}=\{\tau_{m+n}^{\mathbf{K}}\geq m,  n \in
\mathbf{N}\}=\{K_{m+l}\leq l, l \in \mathbf{N}\}.
$$
Without loss of generality, working with the canonical realization
of the process $\mathbf{U}$, we can identify $R_m^{\mathbf{K}}$, for
any $m \in \mathbf{Z}$, as an event of the sequence space
$[0,1)^{\mathbf{Z}}$ . Then $R_m^{\mathbf{K}} = \sigma^m
(R^{\mathbf{K}})$, where $\sigma$ is the forward unit shift on the
sequence space $[0,1)^{\mathbf{Z}}$. By using the ergodic theorem it
is deduced that there exists an a.s. finite $Y_0\leq 0$ such that
$R_{Y_{0}}^{\mathbf{K}}$ is realized. Since $0
\geq\tau^{\mathbf{K}}[0,\infty]\geq
\tau^{\mathbf{K}}[Y_0,\infty]=Y_0$, the former is a.s. finite.
\end{proof}


\comment{==========================================================
Next let us choose integers $ s_1< \dots < s_k $ and compute
\begin{equation}\label{poi}
    P(\cap_{l=1}^k R_{s_l}) = P(R_{s_k})\prod_{j=1}^{k-1}
    P(R_{s_j}| R_{s_{j+1}}\cap \ldots \cap R_{s_{k}}) .
\end{equation}
By translation invariance $P(R_{s_k})=P(R_{0})>0$. Next define, for
$ j < k \leq \infty$, the event
\begin{equation}\label{alqp}
    H (j,k ) =\bigcap_{i=j}^{k-1} \{K_i \leq i-j \} \in
    \mathcal{F}^{k-1}_j
\end{equation}
 and observe that
\begin{equation}\label{inters}
\bigcap_{l =j }^{k } R_{s_l}= \bigcap_{l =j }^{k }  H (s_l,s_{l+1} )
\end{equation}
where $ s_{k+1} =+ \infty$.

Notice that the events at the r.h.s. are mutually independent, which
is not the case for the events at the left hand side. However the
events at the r.h.s. of (\ref{alqp}) are in general not independent,
as it happens with the choice of the $K_i$ made in \cite{CFF}.

Hence
\begin{equation}\label{un}
    P(R_{s_j}| \bigcap_{l =j+1 }^{k }R_{s_l}) = \frac{ P(\bigcap_{l =j }^{k }  H
(s_l,s_{l+1} ) )}{ P(\bigcap_{l =j +1}^{k }  H (s_l,s_{l+1} ) )} =
\end{equation}
\begin{equation}\label{un2}
 =    \frac{ \prod_{l =j }^{k } P( H
(s_l,s_{l+1} ) )}{\prod_{l =j +1}^{k } P(  H (s_l,s_{l+1} ) )} = P(
H (s_j,s_{j+1} ) ).
\end{equation}
By translation invariance the latter depends only on the difference
$s_{j+1}$. The proof of the proposition can be completed along the
lines of \cite{CFF} by using the inclusion-exclusion principle.

\end{proof}
===================================================================}

If $P(R^{\mathbf{K}})>0$, along the same lines of the proof of
Corollary 4.3 in \cite{CFF} , it is possible to prove also that
$\sum_{k \in \mathbf{Z}} \delta_{1_{R_k}} $ is a stationary renewal
process.

In principle, by starting the computation of the coupling function
at time $Y_0$ (from the arbitrarily chosen history
$\mathbf{g}^{Y_0-1}_{-\infty}$), we can construct the process $X_n$
for all times $n\geq Y_0$. However notice that,  $-Y_0$ being a
stopping time w.r.t. the filtration $\{\mathcal{F}^{+\infty}_{-m}; m
\in \mathbf{N} \}$, it is not accessible by simulation.


We finally describe the construction in \cite{CFF} of an information
depth for the maximal coupling function given in the previous
section, with the adjustment needed to take into account its
dependence on the admissible histories. Let us define
\begin{equation*}
a_{k}=\inf \left\{ a_{k}\left( \mathbf{w}_{-k}^{-1}\right)
:\mathbf{w}_{-\infty}^{-1}\in \mathcal{H}\right\}, \,\,\,\, k \in
\mathbf{N}_+,
\end{equation*}
where $a_{k}\left( \mathbf{w}_{-k}^{-1}\right)$ has been defined in
(\ref{antonio}), and assume that the non decreasing sequence
$\left\{ a_{k}, k\in \mathbf{N} \right\} $ tends to $1$, as
$k\rightarrow \infty $. This guarantees that $\left\{
a_{k}(g|\mathbf {w}_{-k}^{-1}),k \in \mathbf{N}\right\} $ converges
to $p(g|\mathbf{w}^{-1}_{-\infty})$, for any
$\mathbf{w}^{-1}_{-\infty}\in \mathcal{H}$ and any $g\in G$, as
$k\rightarrow \infty $: by consequence the maximal coupling function
$f(\cdot|\mathbf{w}^{-1}_{-\infty})$ is well defined for any
$\mathbf{w}^{-1}_{-\infty}\in \mathcal{H}$. Next define $K:\left[
0,1\right) \rightarrow \mathbf{N}$ as
\begin{equation}
K(u )    = \sum_{k =0}^{\infty} k\mathbf{1}_{[ a_{ k-1} , a_k )} (u)
= \inf \{ k \in \mathbf{N} : a_k >u\} \label{kappa}
\end{equation}
where $ a_{-1} =0 $. Since $a_k \leq a_k(\mathbf{w}^{-1}_{-k})$, for
any $\mathbf{w}^{-1}_{-\infty} \in \mathcal{H}$, if $K(u)=k$, the
coupling function $f(u|\mathbf{w}^{-1}_{-\infty})$ is computable by
knowing only $\mathbf{w}^{-1}_{-k}$. Thus if $a_{k}\uparrow 1$, as
$k\rightarrow \infty $, condition (\ref{Kappa}) holds for $K_0 =K
(U_0)$, hence $K(U_0)$ is an information depth.

Sufficient conditions on the sequence $\left\{ a_{k},k\in
\mathbf{N}\right\} $ which guarantee that the stopping times
$\tau_0^{\mathbf{K}} $ and $\tau^{\mathbf{K}}[0,\infty]$ (with
$K_j=K(U_j)$), are finite a.s., can be found in \cite{CFF}. We
summarize the main results in the following proposition.
\begin{proposition}
\label{th0} Let $\left\{ U_{i},i\in -\mathbf{N}\right\} $ be a
sequence of
i.i.d. random variables, uniformly distributed in the interval $\left[ 0,1%
\right) $, and let $\left\{ a_{k}\in [0,1],k\in \mathbf{N}\right\}$
be a sequence increasing to $1$. Define $K$ as in (\ref{kappa}).
\begin{itemize}
    \item [(a)]If $\sum_{k=0}^{\infty
    }\prod\limits_{j=0}^{k}a_{j}=\infty$
(which implies $a_{k}\uparrow 1$, as $k\rightarrow \infty $), then
\begin{equation*}\label{ruo3}
\tau_0^{\mathbf{K}}=\sup \left\{ s \leq 0:K (U_j)\leq j-s, s\leq j
\leq 0 \right\}>-\infty,\hbox{ a.s.}
\end{equation*}
    \item [(b)]If $\prod\limits_{j=0}^{\infty}a_{j}>0$,
then $P(R_{\mathbf{K}})>0$, thus
$$\tau^{\mathbf{K}}[0,\infty]=\sup \left\{ s \leq 0:K (U_j)\leq j-s, s\leq j
\right\}>-\infty,\hbox{ a.s.}$$
\end{itemize}
\end{proposition}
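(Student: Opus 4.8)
The plan is to dispatch part (b) in a couple of lines and to reduce part (a) to the recurrence of an auxiliary skip-free Markov chain, whose recurrence criterion is exactly the divergence of $\sum_k\prod_{j=0}^k a_j$.

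For (b), Lemma \ref{sigaretta} identifies $R^{\mathbf{K}}$ with $\{K_n\leq n,\ n\in\mathbf{N}\}$. Since $K_n=K(U_n)$ and the $U_n$ are independent, these are independent events, and by the definition (\ref{kappa}) of $K$ one has $P(K_n\leq n)=P(U_n<a_n)=a_n$. Hence $P(R^{\mathbf{K}})=\prod_{n=0}^{\infty}a_n$, which is strictly positive precisely under the hypothesis $\prod_{j=0}^{\infty}a_j>0$. The a.s. finiteness of $\tau^{\mathbf{K}}[0,\infty]$ is then immediate from Proposition \ref{rinnovo}, so nothing further is needed.

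For (a), write $V_i:=K(U_{-i})$. From (\ref{ruo2}) one sees that $\tau_0^{\mathbf{K}}>-\infty$ holds iff some $s=-l\leq 0$ belongs to the defining set, i.e. iff the event $H_l:=\{V_i\leq l-i,\ 0\leq i\leq l\}$ occurs for some $l$. The first step is to encode these events through the process $R_l:=\max_{0\leq i\leq l}(V_i+i)-l$, which starts at $R_0=V_0$ and, since $\max_{i\leq l+1}(V_i+i)=\max(\max_{i\leq l}(V_i+i),\,V_{l+1}+l+1)$, obeys the Lindley-type recursion $R_{l+1}=\max(R_l-1,V_{l+1})$. Thus $\{R_l\}$ is a Markov chain on $\mathbf{N}$, skip-free to the left, with $R_l\geq 0$ and $\{R_l=0\}=H_l$. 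By independence of the $V_i$ and the definition of $K$ one computes $P(R_l=0)=P(H_l)=\prod_{j=0}^{l}a_j$, so the hypothesis $\sum_l\prod_{j=0}^{l}a_j=\infty$ says exactly that $\sum_l P(R_l=0)=\infty$, and $\tau_0^{\mathbf{K}}>-\infty$ is equivalent to the chain hitting $0$.

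It then remains to show that $\sum_l P(R_l=0)=\infty$ forces state $0$ to be recurrent and to be hit a.s. from the (delayed) initial law $R_0=V_0$. Since the sum is computed from $V_0$ rather than from $0$, I would decompose $\sum_l P(R_l=0)=\sum_m P(V_0=m)\,G(m,0)$ and use the standard Green's-function identity $G(m,0)=f_{m0}\,G(0,0)\leq G(0,0)$ to conclude $G(0,0)=\infty$, i.e. $0$ is recurrent. The hypothesis also forces $a_0>0$ (otherwise every term $\prod_{j=0}^l a_j$ vanishes), so from any state the chain reaches $0$ through a string of down-steps, each of probability at least $a_0$, while $0$ reaches each $m$ in the support of $V_0$ in one step; these states therefore communicate with the recurrent state $0$, and the classical fact that recurrent states of one communicating class are reached a.s. yields $P(\tau_0^{\mathbf{K}}>-\infty)=\sum_m P(V_0=m)\,f_{m0}=1$.

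The main obstacle is exactly this last passage from divergence of $\sum_l P(R_l=0)$ to almost-sure hitting of $0$: the divergence by itself only gives an infinite Green's function, and one must rule out escape to infinity before concluding that $0$ is actually reached from the initial configuration. The skip-free-to-the-left structure of $R$, together with the communication argument above, is what closes this gap; alternatively, this recurrence is precisely the content of the finiteness results established in \cite{CFF}, which may simply be invoked.
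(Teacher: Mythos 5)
Your argument is correct, and it is worth noting that the paper itself gives no proof of Proposition \ref{th0}: it is stated explicitly as a summary of results proved in \cite{CFF}, so any comparison is really with that reference. Part (b) is handled exactly as one would expect (and as in \cite{CFF}): by Lemma \ref{sigaretta} and the independence of the $U_n$, $P(R^{\mathbf{K}})=\prod_{n\ge 0}P(U_n<a_n)=\prod_{n\ge 0}a_n>0$, and Proposition \ref{rinnovo} finishes. For part (a), your reduction is clean and complete: the identification of $\{\tau_0^{\mathbf{K}}\ge -l\}\supseteq H_l=\{V_i+i\le l,\ 0\le i\le l\}$ with $\{R_l=0\}$ for the Lindley-type chain $R_{l+1}=\max(R_l-1,V_{l+1})$ is exact (since $R_l\ge V_l\ge 0$), the computation $P(R_l=0)=\prod_{j=0}^{l}a_j$ uses only independence, and the passage from $\sum_l P(R_l=0)=\infty$ to almost-sure hitting of $0$ is correctly closed: the Green's function bound $G(m,0)=f_{m0}G(0,0)\le G(0,0)$ gives $G(0,0)=\infty$, hence recurrence of $0$, and since $0$ reaches every state in the support of $V_0$ in one step, the standard fact that a recurrent state is returned to a.s.\ from any state it leads to gives $f_{m0}=1$ on that support, hence $P(\tau_0^{\mathbf{K}}>-\infty)=1$. (The auxiliary descent estimate $f_{m0}\ge a_0^{m}$ is then redundant, though harmless.) Your chain $R_l$ is essentially the time-reversal of the ``house of cards'' process used in \cite{CFF} to prove the same statement, so in spirit the route is the same as the cited source; the added value of your write-up is that it is self-contained and isolates the only delicate point, namely that divergence of the Green's function at $0$ must be upgraded to almost-sure absorption from the delayed initial law $R_0=V_0$.
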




\section{An information depth depending on the whole past} \label{sec2}

In order to motivate the present section we start with a class of
examples for which the sufficient conditions of Proposition
\ref{th0} appear to be rather restrictive.

\medskip

\begin{example}\label{laburina}
Assume $G=\{-1,1\}$, and
consider a kernel  $p:G\times G^{- \mathbf{N}^{\ast }}\rightarrow
\left[ 0,1\right] $ of the following form
\begin{equation*}
p\left( j|\mathbf{w}_{-\infty }^{-1}\right) =\left\{
\begin{array}{cc}
p_k(w_{-1},j), & \hbox{\ if }           w_{-1}=\dots =w_{-k},\,\,\,
w_{-k-1}=-w_{-1}, \\
p_{\infty}(w_{-1},j), & \hbox{\ if } w_{-n}=w_{-1},    \,\,\,    n
\geq 1,
\end{array}
\right.
\end{equation*}
where
$$
P_k=\{ p_k(i,j): i,j \in \{-1,+1\}\}
$$
is a stochastic matrix for any $k\in \mathbf{N}_+\cup \{\infty\}$.
The value $p_k(i,j)$ gives the conditional probability that the next
state is equal to $j$ given that the previous $k$ states are all
equal to the current state $i$, and the $(k+1)$-th equals $-i$ (when
$k< \infty$). Since, after the first change of sign, further
information about the past is not relevant, it appears that
$p_k(i,i), i \in \{-1,+1\}$ represents the survival rates of an
alternating renewal process (see \cite{BB} pp. 32-35).

We assume that there exists $\epsilon>0$ such that, for $i=-1,1$, it
holds
\begin{equation}\label{unifrate}
\epsilon \leq p_h(i,i) \leq 1-\epsilon, \,\,\,  h \in
\mathbf{N}_+\cup \{\infty\}.
\end{equation}

In this case obviously $\mathcal{H}=G^{-\mathbf{N}_+}$; however it
is possible to consider, in the definition of the maximal coupling
function, the smaller set of histories
$$
\mathcal{H}'=\{\mathbf{w}^{-1}_{-\infty}: \sum_{i=1}^{\infty}
\delta_{w_{-i},1}=+\infty, \sum_{i=1}^{\infty}
\delta_{w_{-i},-1}=+\infty\}.
$$
The assumptions of Proposition \ref{confetto} are rather trivially
checked. 1. holds since the occurrence of $\mathbf{w}^{-1}_{-\infty}
\in \mathcal{H}$ depends on the tail of the sequence
$\mathbf{w}^{-1}_{-\infty}$. 2. is due to the fact that $f(u|\cdot
)$ is constant on sufficiently small balls in $\mathcal{H}'$.
Finally 3. and 4. are ensured by (\ref{unifrate}). \comment{ By the
assumption of continuity
$$
\lim_{k \to \infty}p_k(i,j)=p_{\infty}(i,j), \,\,\, i,j \in
\{-1,1\}.
$$}

Moreover
$$
a_0(-1)=\min\{\inf_{h\in \mathbf{N}_+}p_h(-1,-1),1-\sup_{h \in
\mathbf{N}_+}p_h(1,1)\}\geq \epsilon,
$$
$$ a_0(1)=  a_0 - a_0(-1) =\min\{1-\sup_{h
\in \mathbf{N}_+}p_h(-1,-1),\inf_{h\in \mathbf{N}_+}p_h(1,1)\}\geq
\epsilon.
$$
As a consequence
\begin{equation}\label{stopper}
\tau_0(\mathbf{U}_{-\infty}^{0})=\sup \{n\in -\mathbf{N}:
1_{[a_0(-1),a_0)}(U_n)1_{[0,a_0(-1))}(U_{n-1})+1_{[a_0(-1),a_0)}(U_{n-1})1_{[0,a_0(-1))}(U_{n})=1\}
\end{equation}
is a.s. finite and it is readily checked to be a backward
coalescence time. Applying Proposition \ref{confetto} the existence
and uniqueness of a stationary alternating renewal process is
proved. Condition (\ref{unifrate}) is certainly more restrictive
than needed, since it implies that the distributions of the holding
times have exponential tail bounds, whereas it is enough that they
have finite mean, see \cite{BB}.

Next we check the sufficient conditions in Proposition \ref{th0}.
Since
$$
a_k=1-\max(\sup_{h \geq k}p_h(-1,-1)-\inf_{h \geq
k}p_h(-1,-1),\sup_{h \geq k}p_h(1,1)-\inf_{h \geq k}p_h(1,1)), k \in
\mathbf{N}_+,
$$
it follows that $a_k \to 1$, as $k \to \infty$, if and only if
$p_k(i,i)$ converge as $k \to \infty$, for $i=-1,1$. If this
condition fails Proposition \ref{th0} cannot be applied. Even if
this condition holds, the speed of convergence of $a_k$ to $1$ can
be so slow that condition $(a)$ in Proposition \ref{th0} is still
violated. This happens, for example, for
$$
p_h(i,i)=\frac{1}{2}\left (1-\frac{1}{\sqrt {h+1}}\right ),
\,\,\,i\in \{-1,1\}, h \in \mathbf{N}_+\cup \{\infty\}.
$$
\end{example}

The previous  example suggests to investigate alternative ways to
define an information depth for the maximal coupling function, order
for the construction of a backward coalescence time under weaker
assumptions than those given in Proposition \ref{th0}.

\comment{
======================================

 We assume that
\begin{equation*}
c(-1,0)<c(1,0),
\end{equation*}
and the sequence $\{c(1,k)\}$ ($\{c(-1,k)\}$) is increasing
(decreasing) as $k \in \mathbf{N}$ increases. Moreover
\begin{equation*}
c(-1,k)\downarrow c(-1,\infty )>0, \hbox{ }c(1,k)\uparrow c(1,\infty
)<1, \hbox{ }k \uparrow \infty .
\end{equation*}
Following the notation in the preceding section, it is easily
computed that
\begin{equation*}
a_{0}(1)=c(-1,\infty ),\,\,\,\, a_{0}(-1)=1-c(1,\infty )
\end{equation*}%
\begin{equation*}
a_{k}\left( 1|\mathbf{w}_{-k}^{-1}\right) =\left\{
\begin{array}{cc}
c(1,k-1), & w_{-i}=1,i=1,...,k \\
c(w_{-1},h), & \hbox{\ }w_{-1}=...=w_{-\left( h+1\right)
},w_{-\left(
h+2\right) }=-w_{-1}, 0 \leq h \leq k-2 \\
c(-1,\infty ), & w_{-i}=-1,i=1,...,k%
\end{array}
\right.
\end{equation*}%
and%
\begin{equation*}
a_{k}\left( -1|\mathbf{w}_{-k}^{-1}\right) =\left\{
\begin{array}{cc}
1-c(1,\infty ), & w_{-i}=1,i=1,...,k \\
1-c(w_{-1},h), & \hbox{\ }w_{-1}=...w_{-\left( h+1\right) },w_{-\left(
h+2\right) }=-w_{-1}, 0 \leq h \leq k-2 \\
1-c(-1,k-1), & w_{-i}=-1,i=1,...,k%
\end{array}
\right.
\end{equation*}
so that
\begin{equation*}
a_{0} =1+c(-1,\infty )-c(1,\infty ),
\end{equation*}
\begin{equation*}
a_{k}= 1-\max\{c(1,\infty )-c(1,k-1),
c(1,k-1)-c(-1,\infty)\}\uparrow 1,\,\,\,\, k \uparrow \infty.
\end{equation*}
The speed of convergence of $a_{k}$ to $1$ can be made arbitrarily
slow in this model, by choosing $c(1,k)$ (or $c(-1,k)$) increasing
(decreasing) slowly enough to $c(1,\infty )$ ($c(-1,\infty )$).
Hence it is possible to construct examples in which
\begin{equation}  \label{fail}
\sum_{k=0}^{\infty} \prod\limits_{j=0}^{k}a_{j} < \infty ,
\end{equation}
denying condition (\ref{labrutta}).

It is quite clear that, keeping the same coupling function as in
\cite{CFF}, the following stopping time
\begin{equation}  \label{tau}
\tilde \tau := \inf \{ s \geq 1 : U_{-(s-1)} < c(-1, \infty) , %
\hbox{ } c(-1, \infty) \leq U_{s} <1+c(-1, \infty)-c(1, \infty)\}
\end{equation}
satisfies properties 1. and 2. Hence, starting the process from
$-\tilde \tau$, it is possible to produce the value of the process
uniquely at time $0$ without information prior to this time. Hence
this stopping time realizes that the maximal coupling function is
indeed successful for perfect simulation, whereas the general
stopping time given in \cite{CFF} does not allow to discover it (it
could assume the value $+\infty$ with positive probability).  In the
next section our aim this situation will be considerably
generalized.

\section{A generalization of the CFF condition} \label{sec3}

In this section we are going to define a suitable stopping time $K'$
w.r.t. the filtration $\{ \mathcal{F} _{n}=\sigma \left(
U_{-i},i=0,...,n\right); n \in \mathbf{N} \}$.

==================================================================

}

\medskip
Let us introduce the process $\{A_h,\,\,\, h \in \mathbf{N}\}$, with
$A_0=a_0$ and
\begin{equation} \label{Agrande}
A_{h } ( U_{-1}, \dots , U_{-h} ) : =\inf \left\{ a_h (
\mathbf{w}^{-1}_{-h} ) :\mathbf{w}^{-1}_{-\infty} \in
J_h(U_{-1},\ldots,U_{-h})\right
 \}, \,\,\,  h \in \mathbf{N}_+,
\end{equation}
where
\begin{equation}  \label{randA}
J_h(U_{-1},\ldots,U_{-h})=\left\{ \mathbf{w}^{-1}_{-\infty}\in
\mathcal{H} : w_{-k} = g ,\hbox{ if } U_{-k} \in B_0(g),\,\,\, g \in
G,\,\, k\leq h \right \}
\end{equation}
is a set of histories smaller than $\mathcal{H}$, since the states
which can be identified, given $U_{-1},\ldots,U_{-h}$, are kept
fixed. In fact, whenever $U_{-k} \in B_0 (g)$, the function
$f(U_{-k};\mathbf{w}^{-(k+1)}_{-\infty})$ is equal to $g$,
irrespectively of the previous history
$\mathbf{w}^{-(k+1)}_{-\infty} \in \mathcal{H}$. Since $A_{h } (
U_{-1}, \dots , U_{-h} )$ is an infimum computed on a smaller set,
it holds
\begin{equation}\label{bbb}
A_{h } ( U_{-1}, \dots , U_{-h} )\geq a_h, h\in \mathbf{N},
\end{equation}
for any realization of the i.i.d. sample $\mathbf{U}^{-1}_{-h}$.

\comment{ ==========Now notice that the sequence
$\mathbf{V}^{-1}_{-\infty}$ with components
$$
V_{-k}=  \left \{  \begin{array}{ll}
         g & \hbox{ if } U_{-k} \in B_0 (g), \,\,\, g  \in G \\
         \partial & \hbox{ otherwise }     \\
       \end{array}   \right .
$$
is a generalized Bernoulli scheme with values in $\tilde G=G\cup
\{\partial\}$. If $V_{-k}=g \in G$, then
$f(U_{-k};\mathbf{w}^{-(k+1)}_{-\infty})=g$, irrespectively of the
previous history $\mathbf{w}^{-(k+1)}_{-\infty}$. Therefore
$$
A_{h} ( U_{-1}, \dots , U_{-h} )=\psi_h(V_{-1}, \dots , V_{-h} ),
$$
for some function $\psi_h: \tilde G^h \rightarrow \mathbf{R}^+$, for
$h\in \mathbf {N}_+$. ========================}

Now let us define
\begin{equation}  \label{tempi}
K'(\mathbf{U}^{0}_{-\infty})=\inf\{j \in \mathbf{N}: U_{0}<A_{j} (
U_{-1}, \dots , U_{-j} )\}.
\end{equation}

\begin{proposition}\label{infod}
    If $\lim_h A_h(U_{-1},\ldots,U_{-h})=1$ a.s., $K'$ is an
    information depth for the maximal coupling function.
\end{proposition}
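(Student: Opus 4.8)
The plan is to check the three defining requirements of an information depth for $K'=K'(\mathbf{U}^0_{-\infty})$: almost sure finiteness, the stopping time property with respect to $\{\mathcal{F}^0_{-n}\}$, and the implication that $K'=m$ entails the identity (\ref{Kappa}). The first two are quickly disposed of. Because $U_0$ is uniform on $[0,1)$ and independent of $(U_{-1},U_{-2},\ldots)$, we have $U_0<1$ surely; combined with the hypothesis $A_h(U_{-1},\ldots,U_{-h})\to 1$ a.s., this gives an index $j$ with $A_j>U_0$ on a set of full measure, so $K'<\infty$ a.s. For the stopping time property I would first observe that $A_h$ depends on $(U_{-1},\ldots,U_{-h})$ only through the finitely many forcing events $\{U_{-k}\in B_0(g)\}$, $k\le h$, $g\in G$; as these yield only countably many configurations, $A_h$ is $\mathcal{F}^{-1}_{-h}$-measurable, whence $\{K'=j\}=\{U_0<A_j\}\cap\bigcap_{i<j}\{U_0\ge A_i\}\in\mathcal{F}^0_{-j}$.

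The substance of the proof is (\ref{Kappa}). Fix $m$ and an admissible history $\mathbf{w}^{-(m+1)}_{-\infty}\in\mathcal{H}$ for which (\ref{nonso}) holds, and denote by $X_{-1},\ldots,X_{-m}$ the states appearing on the right-hand side of (\ref{Kappa}), that is, the states at times $-1,\ldots,-m$ produced by iterating the coupling function forward from $\mathbf{w}^{-(m+1)}_{-\infty}$. The key structural fact I would establish is that the computed history $\tilde{\mathbf{w}}:=(X_{-1},\ldots,X_{-m},\mathbf{w}^{-(m+1)}_{-\infty})$ belongs to $J_m(U_{-1},\ldots,U_{-m})$. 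On one hand $\tilde{\mathbf{w}}\in\mathcal{H}$, by applying the invariance of $\mathcal{H}$ under the coupling function $m$ times. On the other hand, each $X_{-k}=f(U_{-k}|X_{-(k+1)},\ldots,X_{-m},\mathbf{w}^{-(m+1)}_{-\infty})$, so whenever $U_{-k}\in B_0(g)$ the minorization built into the maximal coupling function forces $X_{-k}=g$ irrespective of the history below time $-k$; this is exactly the constraint defining $J_m$. Since $\tilde{\mathbf{w}}\in J_m$, the infimum defining $A_m$ yields $a_m(\tilde{\mathbf{w}}^{-1}_{-m})\ge A_m(U_{-1},\ldots,U_{-m})$, and $K'=m$ gives $U_0<A_m$, so that $U_0<a_m(\tilde{\mathbf{w}}^{-1}_{-m})$.

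It then remains to invoke the defining property of the maximal coupling function: the level $0,\ldots,m$ intervals $B_j(g|\tilde{\mathbf{w}}^{-1}_{-j})$ cover $[0,a_m(\tilde{\mathbf{w}}^{-1}_{-m}))$ and depend on $\tilde{\mathbf{w}}$ only through $\tilde{\mathbf{w}}^{-1}_{-m}=(X_{-1},\ldots,X_{-m})$. Hence $U_0<a_m(\tilde{\mathbf{w}}^{-1}_{-m})$ forces $f^{(m+1)}(U_0,\ldots,U_{-m}|\mathbf{w}^{-(m+1)}_{-\infty})=f(U_0|\tilde{\mathbf{w}})$ to be determined by $(X_{-1},\ldots,X_{-m})$ alone; replacing the tail $\mathbf{w}^{-(m+1)}_{-\infty}$ by $\mathbf{g}^{-(m+1)}_{-\infty}$ — legitimate because (\ref{nonso}) guarantees the resulting history is admissible, so $f$ is defined there — leaves the value unchanged, which is precisely (\ref{Kappa}). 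Its persistence for indices larger than $K'$ is the monotonicity already remarked after that display. The step I expect to be the crux is transferring the a priori bound $A_m$ to the realized trajectory, i.e. showing $\tilde{\mathbf{w}}\in J_m$ so that $a_m(\tilde{\mathbf{w}}^{-1}_{-m})\ge A_m$; once this link between the infimum over $J_m$ and the actual computed states is in place, the rest is bookkeeping with the coupling intervals.
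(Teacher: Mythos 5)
Your argument is correct and follows essentially the same route as the paper's proof: the crux in both is that the realized trajectory $(f^{(m)}(U_{-1},\ldots,U_{-m}|\mathbf{w}^{-(m+1)}_{-\infty}),\ldots,f^{(1)}(U_{-m}|\mathbf{w}^{-(m+1)}_{-\infty}),\mathbf{w}^{-(m+1)}_{-\infty})$ lies in $J_m(U_{-1},\ldots,U_{-m})$, so the infimum bound $U_0<A_m\leq a_m(\cdot)$ transfers to it, after which the structure of the maximal coupling function and condition (\ref{nonso}) give (\ref{Kappa}). Your added verifications of finiteness and the stopping-time property are fine but are exactly the points the paper dismisses as needing no explanation.
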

\begin{proof}\label{infoprova}
 Only property (\ref{Kappa}) needs some explanations.
Suppose that $ K'(\mathbf{U}^{0}_{-\infty}) =m  $. Then
$$
 U_0 < A_{m} ( U_{-1}, \dots , U_{-m} ) \leq a_m (
\mathbf{w}^{-1}_{-m}), \,\,\, \forall  \mathbf{w}^{-1}_{-\infty} \in
J_m(U_{-1}, \dots,U_{-m} ),
$$
which from the definition of the maximal coupling function implies
\begin{equation}\label{serva}
  f(U_0 |\mathbf{w}^{-1}_{-\infty} ) = f(U_0 |\mathbf{z}^{-1}_{-\infty}
) ,\,\,\, \hbox{ for } \mathbf{w}^{-1}_{-\infty},
\mathbf{z}^{-1}_{-\infty} \in J_m(U_{-1}, \dots,U_{-m} ) , \hbox{
such that } \mathbf{w}^{-1}_{-m} =\mathbf{z}^{-1}_{-m}.
\end{equation}
Finally consider any $ \mathbf{w}^{-(m +1)}_{-\infty} \in
\mathcal{H}$; then, by choosing
$$
 \mathbf{w}^{-1}_{-m} = ( f^{(m)} (U_{-1} , \ldots,U_{-m} |  \mathbf{w}^{-(m +1)}_{-\infty} ),
 \ldots,  f^{(1)} (U_{-m} |  \mathbf{w}^{-(m +1)}_{-\infty} )),
$$
the history $\mathbf{w}^{-1}_{-\infty} \in J_m(U_{-1} ,
\ldots,U_{-m} )$. Now set $\mathbf{z}^{-1}_{-\infty} = (
\mathbf{w}^{-1}_{-m} , \mathbf{g}^{-(m +1)}_{-\infty} )$. If $
\mathbf{z}^{-1}_{-\infty} \in \mathcal{H}$ then it belongs
necessarily to $ J_m(U_{-1}, \dots,U_{-m} ) $ , in which case
 formula (\ref{serva}) is turned into (\ref{Kappa}).

\end{proof}

Next define $\{K'_{j}=K'(\mathbf{U}^{j}_{-\infty}), j \in
\mathbf{Z}\}$, and $\tau_0^{\mathbf{K}'}$ and $R_0^{\mathbf{K}'}$ as
in (\ref{ruo2}) and (\ref{taucappa}), respectively: then the
following result holds.

\begin{theorem}
\label{rinnov} If  $  \prod_{h=0}^{\infty} A_h (U_{h-1} , \dots ,
U_{0} )^{-1} \in  \mathcal{L}^1$ then $P(R_0^{\mathbf{K}'}) >0 $.
\end{theorem}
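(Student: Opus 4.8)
The plan is to reduce the claim to a positivity statement about a single sufficient event and then to establish that positivity through a nonnegative martingale dominated by the integrable random variable appearing in the hypothesis. Throughout write $B_h:=A_h(U_{h-1},\ldots,U_0)$, so that $B_h$ is $\mathcal{F}^{h-1}_0$-measurable and $B_h\le 1$ (by (\ref{noia}) each $a_h(\mathbf{w}^{-1}_{-h})\le 1$, hence so is the infimum (\ref{Agrande})). The hypothesis reads $W:=\prod_{h=0}^{\infty}B_h^{-1}\in\mathcal{L}^1$; in particular $W<\infty$ a.s., which forces every $B_h>0$ a.s.\ and $A_h\to 1$ a.s.\ (so that $K'$ is genuinely an information depth by Proposition \ref{infod}). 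By Lemma \ref{sigaretta} we have $R_0^{\mathbf{K}'}=\{K'_n\le n,\ n\in\mathbf{N}\}$. Taking $j=n$ in the infimum (\ref{tempi}) defining $K'_n=K'(\mathbf{U}^n_{-\infty})$, one sees that $U_n<B_n$ already forces $K'_n\le n$. Hence, setting $\tilde R_N:=\bigcap_{n=0}^{N}\{U_n<B_n\}$ and $\tilde R:=\bigcap_{n\in\mathbf{N}}\{U_n<B_n\}$, we get $\tilde R\subseteq R_0^{\mathbf{K}'}$, and it suffices to prove $P(\tilde R)>0$.

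Next I would introduce the process
$$
M_N:=\mathbf{1}_{\tilde R_N}\prod_{h=0}^{N}B_h^{-1},\qquad N\in\mathbf{N},
$$
adapted to $\{\mathcal{F}^N_0\}$, and show it is a mean-one martingale. The key split is that $M_{N-1}=\mathbf{1}_{\tilde R_{N-1}}\prod_{h=0}^{N-1}B_h^{-1}$ and $B_N$ are $\mathcal{F}^{N-1}_0$-measurable, while $U_N$ is uniform and independent of $\mathcal{F}^{N-1}_0$; since $0<B_N\le 1$, we get $E[\mathbf{1}_{\{U_N<B_N\}}B_N^{-1}\mid\mathcal{F}^{N-1}_0]=1$, whence $E[M_N\mid\mathcal{F}^{N-1}_0]=M_{N-1}$. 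As $M_0=\mathbf{1}_{\{U_0<a_0\}}/a_0$ has mean one (recall $B_0=A_0=a_0$), it follows that $E[M_N]=1$ for every $N$.

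The decisive observation is the domination $M_N\le\prod_{h=0}^{N}B_h^{-1}\le W$, valid because each factor $B_h^{-1}\ge 1$. Since $W\in\mathcal{L}^1$, the family $\{M_N\}$ is uniformly integrable, so $M_N\to M_\infty$ a.s.\ and in $\mathcal{L}^1$, giving $E[M_\infty]=1$. Finally, because the events $\tilde R_N$ are non-increasing, the sequence $M_N$ is absorbed at $0$ (once $M_N=0$ it stays $0$); thus on $\tilde R^c$ one has $M_\infty=0$, while on $\tilde R$ the partial products increase to $M_\infty=W\ge 1>0$, so $\{M_\infty>0\}$ coincides with $\tilde R$. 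Therefore $P(\tilde R)=P(M_\infty>0)>0$, since a nonnegative random variable of expectation one cannot vanish almost surely. I expect the only delicate points to be bookkeeping: verifying that $B_n=A_n(U_{n-1},\ldots,U_0)$ is exactly the quantity produced by the shift in (\ref{tempi}) at $j=n$, and the measurability/independence split underlying the martingale property. The genuine difficulty — that the events $\{U_n<B_n\}$ are \emph{dependent}, so their probabilities cannot simply be multiplied as in Proposition \ref{th0}(b) — is precisely what the domination by $W$ circumvents.
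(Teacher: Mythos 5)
Your proof is correct and is essentially the paper's own argument: the process you call $M_N$ is exactly the martingale $Y_n$ of the paper, the uniform integrability is obtained by the same domination by $\prod_h A_h^{-1}$, and the conclusion follows identically from $E(M_\infty)=1$. The only (harmless) difference is that you use just the inclusion $\bigcap_n\{U_n<A_n(U_{n-1},\ldots,U_0)\}\subseteq R_0^{\mathbf{K}'}$ where the paper asserts equality; the inclusion suffices for positivity.
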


\begin{proof}\label{jk}
The sequence
\begin{equation}\label{marta}
Y_n := \frac{\prod_{h =0}^{n} \mathbf{1}_{\{ U_h < A_h (U_{h-1} ,
\ldots , U_0 ) \}} }{\prod_{h =0}^{n}A_h (U_{h-1} , \ldots , U_0 )
},\,\,\,n \in \mathbf{N},
\end{equation}
is uniformly integrable, because it is bounded by the integrable
random variable $ {\prod_{h=0}^{\infty} A_h (U_{h-1} , \dots , U_{0}
)^{-1}}$. Moreover it is a martingale with respect to the filtration
$\{ \mathcal{F}^n_0=\sigma ( U_{n} , \ldots, U_0 ):n \in \mathbf{N}
\}$. In fact, since $ U_n $ is independent of $\mathcal{F}^{n-1}_0
$, it holds
$$
E\left ( \mathbf{1}_{\{ U_n < A_n (U_{n-1} , \ldots , U_0 ) \}} |
\mathcal{F}^{n-1}_0 \right )=A_n (U_{n-1} , \ldots , U_0 ),
$$
hence
$$
E( Y_n | \mathcal{F}^{n-1}_0)= \frac{\prod_{h =0}^{n-1}
\mathbf{1}_{\{ U_h < A_h (U_{h-1} , \ldots , U_0 ) \}} }{\prod_{h
=0}^{n}A_h (U_{h-1} , \ldots , U_0 ) }E\left ( \mathbf{1}_{\{ U_n <
A_n (U_{n-1} , \ldots , U_0 ) \}} | \mathcal{F}^{n-1}_0 \right
)=Y_{n-1}.
$$
Furthermore, since $ Y_0 ={a_0 }^{-1} { \mathbf{1}_{\{ U_0 < a_0 \}}
}$, it follows that $E (Y_n ) =E(Y_0 ) =1$.

Since $\{Y_n\} $ is uniformly integrable, from a well known result
(see \cite{Wi}, page 134), the limit $ Y_\infty :=\lim_{ n \to
\infty } Y_n $ is finite a.s. with $ E(Y_\infty )=1 $. As a
consequence $Y_\infty
>0 $ with positive probability. But clearly
\begin{equation}\label{aprile}
  \{  Y_\infty >0 \}=\bigcap_{ h=0}^{\infty} \{  U_h < A_h (U_{h-1} , \ldots , U_0
  )\}=\{K'_h\leq h, h \in \mathbf{N}\}=R_0^{\mathbf{K}'},
\end{equation}
which ends the proof.
\end{proof}

By applying Proposition \ref{rinnovo}, the previous theorem implies
that a perfect simulation algorithm can be constructed from the
information depths $\{K'_{j},j \in \mathbf{Z}\}$.

{ {\bf Example 1 } (continued). \label{esempio} \it We prove that
Theorem  \ref{rinnov} can be applied to Example \ref{laburina},
under the assumption
\begin{equation}\label{motivo}
 a_0 (-1) >0 , \,\,\,  a_0 (1) >0 , \,\,\,  a_\infty := \sup_{k \geq 0 } a_k  > 1 -2 a_0 (-1)a_0 (1) .
\end{equation}
Notice that the latter condition is automatically verified if the
former ones hold and $a_\infty =1$.
  Define
\begin{equation}\label{stopper2}
N(\mathbf{U}_{0}^{\infty})=\inf \{n\in \mathbf{N}:
1_{[a_0(-1),a_0)}(U_n)1_{[0,a_0(-1))}(U_{n-1})+1_{[a_0(-1),a_0)}(U_{n-1})1_{[0,a_0(-1))}(U_{n})=1\}.
\end{equation}
Then if $h>N(\mathbf{U}_{0}^{\infty})$, and
$\mathbf{w}^{h-1}_{-\infty} \in J_h(U_{h-1},\ldots,U_0)$ then
$$
a_h(g|\mathbf{w}^{h-1}_0)=p(g|\mathbf{w}^{h-1}_{-\infty}).
$$
In this case
$$
A_h (U_{h-1} , \dots , U_{0} )=\inf \left\{\sum_{g \in
G}a_h(g|\mathbf{w}^{h-1}_0): \mathbf{w}^{h-1}_{-\infty} \in
J_h(U_{h-1},\ldots,U_0)\right\}
$$
$$
=\inf \left\{\sum_{g \in G}p(g|\mathbf{w}^{h-1}_{-\infty}):
\mathbf{w}^{h-1}_{-\infty} \in J_h(U_{h-1},\ldots,U_0)\right\}=1.
$$
Therefore, for any $\delta\in (0 , a_\infty - 1 + 2 a_0(-1)a_0(1))$,
choosing $n_0=n_0(\delta)$ such that $a_{n_0}\geq 1-2
a_0(-1)a_0(1)+\delta$, it is obtained
\begin{equation}\label{integr}
(\prod_{h=0}^{\infty} A_h (U_{h-1} , \dots , U_{0}
))^{-1}=(\prod_{h=0}^{N} A_h (U_{h-1} , \dots , U_{0} ))^{-1}\leq (
1-2 a_0(-1)a_0(1)+\delta  )^{-N} \prod_{h=0}^{n_0}a_h^{-1}.
\end{equation}
The expression (\ref{stopper2}) suggests a majorization of $N$ with
twice a geometric random variable having the success probability
$p=2a_0(1)a_0(-1)$. Since the radius of convergence for the p.g.f.
of this kind of random variable is $1/ (1-p)$ the assumption
(\ref{motivo}) and the bound (\ref{integr}) imply that
$\prod_{h=0}^{\infty} A_h (U_{h-1} , \dots , U_{0} ))^{-1}$ is
integrable. Therefore Theorem \ref{rinnov} can be applied, showing
that the stationary alternating renewal process can be perfectly
simulated. }

\comment{ for $h \geq 2$,
\begin{equation*}
C_h = \{ (u_{h-1} , \ldots , u_{0}) : \exists 0 \leq s \leq h-2 ,
\,\,\, u_s < c(-1, \infty) , \hbox{ } c(-1, \infty) < u_{s +1}
<2c(-1, \infty)\} .
\end{equation*}
It is clear that the random variable $N $ is dominated by a
geometric r.v. hence assumption 1) is satisfied. Since $\alpha_h =1$
for $h \geq 2$ it is $\prod_{h=2}^{\infty } \alpha_{h } =1$, hence
assumption 2) is satisfied as well. Since 3) was already assumed in
Section \ref{sec2}, we conclude that the events $ \{C_h: h \geq 2\}$
satisfy the assumption of Proposition \ref{teo2}.

Independently for each site $ i \in \mathbb{Z}$ we construct a
random variable $ U_i $ uniform in $[0,1]$ by a two stage procedure.
First draw $Z_i $ with values in $\tilde G$ such that
\begin{equation}\label{Z}
    P(Z_i =g ) = |B_0 (g)| ,\,\,\,  g \in G
\end{equation}
\begin{equation}\label{Z2}
    P(Z_i =0 ) = 1- |B_0 | .
\end{equation}
Next choose $U_i $ to be uniform in $ B_0(g )$ if $Z_i =g$ and $U_i
$ uniform in $[0,1] \setminus B_0 $ if $Z_i =0$. Since $ A_h
(U_{h-1} , \dots ,    U_{0} )=f_h (Z_{h-1} , \dots ,    Z_{0} )$ we
define the event $   C\subset (\tilde G)^{-\mathbb{N}_+} $ by
\begin{equation}\label{insieme}
    C =  \{ z =(z_0 , z_1 , \ldots) : \prod_{h=0
    }^{\infty} f_h (z_{h-1} , \ldots ,    z_{0} ) >0\} .
\end{equation}

?????????????????????????????

\end{proof}

}

Inspired by the previous example, in the following corollary we
present a sufficient condition, possibly easier to verify, which
guarantees that the assumption in Theorem \ref{rinnov} holds.
\begin{corollary}
\label{teo2} Let $C_h$ be a Borel subset of $[0,1)^h$ and suppose:
\begin{itemize}
    \item[1)] the sequence \begin{equation}  \label{alfah}
\alpha_h=\inf\{A_h(u_{h-1},...,u_{0}):(u_{h-1},...,u_{0})\in C_h\}
\end{equation}
is such that $\prod_{h=0}^{\infty} \alpha_{h } >0 $ (in particular
$\alpha_0=a_0>0$);
    \item[2)] the random variable
\begin{equation}\label{tempo2}
    N:=N(\mathbf{U}^{+\infty}_{0}) = \inf \{ m : (U_{n-1} , \ldots , U_0) \in C_n, \forall n \geq m\}
\end{equation}
has a probability generating function $E ( s^{ N} )< \infty $ for
some $ s
> 1/a_\infty$.
\end{itemize}
Then
\begin{equation}  \label{pal}
E\left ( \prod_{h=0}^{\infty } A_{h }(U_{h-1},...,U_{0})^{-1} \right
)<\infty .
\end{equation}
\end{corollary}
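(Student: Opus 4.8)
The plan is to dominate the infinite product $\prod_{h=0}^{\infty} A_h(U_{h-1},\ldots,U_0)^{-1}$ pointwise by a constant multiple of $s^{N}$, for a base $s$ lying inside the radius of convergence of the p.g.f.\ of $N$, and then to take expectations. I will use three facts established earlier: by (\ref{bbb}) one always has $A_h\geq a_h$; the sequence $\{a_h\}$ is nondecreasing with limit $a_\infty=\sup_k a_k\in(0,1]$ (indeed $a_\infty\geq a_0>0$); and $A_h\leq 1$, since $A_h$ is an infimum of values $a_h(\mathbf{w}^{-1}_{-h})\leq 1$ by (\ref{noia}). Consequently $\alpha_h=\inf_{C_h}A_h\in(0,1]$ for every $h$, so that assumption 1) yields a finite constant $C:=\prod_{h=0}^{\infty}\alpha_h^{-1}<\infty$.

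First I would split the product at the random index $N$ from (\ref{tempo2}), which is a.s.\ finite because assumption 2) gives $E(s_0^N)<\infty$ for some $s_0>1/a_\infty\geq 1$. For every $h\geq N$ the definition of $N$ forces $(U_{h-1},\ldots,U_0)\in C_h$, hence $A_h(U_{h-1},\ldots,U_0)\geq\alpha_h$; combined with $\alpha_h\leq 1$ this gives $\prod_{h\geq N}A_h^{-1}\leq\prod_{h\geq N}\alpha_h^{-1}\leq C$. For the remaining factors $h<N$ I would use only the crude bound $A_h\geq a_h$, obtaining
\begin{equation*}
\prod_{h=0}^{\infty} A_h(U_{h-1},\ldots,U_0)^{-1}\leq C\prod_{h=0}^{N-1}a_h^{-1}.
\end{equation*}

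The key step is to control $\prod_{h=0}^{N-1}a_h^{-1}$ at the correct exponential rate. Since $a_h\uparrow a_\infty$, for each fixed $\delta>0$ there is an index $n_0=n_0(\delta)$ with $a_h\geq a_\infty-\delta$ for all $h\geq n_0$. Setting $D:=\prod_{h=0}^{n_0-1}a_h^{-1}<\infty$ and using $a_\infty-\delta\leq 1$, a short computation (distinguishing the cases $N\leq n_0$ and $N>n_0$, the initial terms where $a_h$ may be small being absorbed into $D$) gives $\prod_{h=0}^{N-1}a_h^{-1}\leq D\,(a_\infty-\delta)^{-N}$. Hence, pointwise, $\prod_{h=0}^{\infty}A_h^{-1}\leq CD\,s^{N}$ with $s:=(a_\infty-\delta)^{-1}$.

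Finally I would choose $\delta$. Since $(a_\infty-\delta)^{-1}\downarrow 1/a_\infty$ as $\delta\downarrow 0$ and $s_0>1/a_\infty$, I can fix $\delta>0$ so small that $1\leq s=(a_\infty-\delta)^{-1}\leq s_0$; then $s^{N}\leq s_0^{N}$, and taking expectations yields $E\big(\prod_{h=0}^{\infty}A_h^{-1}\big)\leq CD\,E(s_0^{N})<\infty$, which is exactly (\ref{pal}). The genuinely delicate point is the third paragraph: one must match the growth rate $(a_\infty-\delta)^{-N}$, produced by the tail of the increasing sequence $\{a_h\}$, against the requirement $s_0>1/a_\infty$ on the radius of convergence of the p.g.f.\ of $N$. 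Everything else is bookkeeping, and the possibly small values $a_h$ for $h<n_0$ cause no trouble because they contribute only the fixed constant $D$.
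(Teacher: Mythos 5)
Your proof is correct and follows essentially the same route as the paper's: split the product at $N$, bound the tail factors by $\prod_h \alpha_h^{-1}$ using assumption 1), bound the initial factors via $A_h\geq a_h$, and then dominate $\prod_{h<N}a_h^{-1}$ by a constant times $s^N$ for some $s$ with $1/a_\infty<s\leq s_0$, absorbing the finitely many small $a_h$ into a constant. The only differences are cosmetic (where exactly the product is split and whether the cutoff index is chosen via $1/a_k<s$ or via $a_h\geq a_\infty-\delta$).
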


\begin{proof} \label{2consuf}
By definition of $N$, $ A_h \geq \alpha _h $ for $h > N $. Moreover,
since $A_h  \geq a_h $ for each integer $h$, we have
\begin{equation}\label{disA}
    \frac{1}{\prod_{h=0}^{\infty} A_h }\leq  \frac{1}{\prod_{h=0}^{N} a_h
    }   \frac{1}{\prod_{h=N+1}^{\infty} \alpha_h } .
\end{equation}
By taking expected values at both sides, with a straightforward
bound for the second factor at the r.h.s., it is obtained
\begin{equation}\label{disA2}
    E \left (\frac{1}{\prod_{h=0}^{\infty} A_h } \right) \leq  E  \left (\frac{1}{\prod_{h=0}^{N} a_h
    } \right) \frac{1}{\prod_{h=0}^{\infty} \alpha_h }
     .
\end{equation}
The second factor at the r.h.s. is finite by assumption 1). By
assumption there exists an integer $k $ such that $ 1/a_k < s$, $s$
being as in 2). By consequence we have the following bound for the
first factor
\begin{equation}\label{bbq}
  E  \left (\frac{1}{\prod_{h=0}^{N} a_h
    } \right) \leq \frac{1}{\prod_{h =0}^{k-1}    a_h    }E( s^{ N}
    )< + \infty,
\end{equation}
from which the corollary follows.
\end{proof}

Finally we provide another class of models that satisfy the
conditions of Corollary \ref{teo2} but not those of Proposition
\ref{th0}.

\begin{example}\label{pezzoforte}
Consider positive summable sequences $\beta(i)\geq \gamma(i),\,\, i
\in \mathbf{N}_+$,
and assume that $p_1\in (0,1)$, $\sigma
>0 $ and $c
>0 $ are such that
\begin{equation}\label{did}
    p_1 (  1- c\sum_{i=1}^{\infty} \beta (i)) > \sigma;
\end{equation}
moreover assume that $\sum_{i=1}^{\infty}i\gamma(i)<\infty$.

Now define the kernel $p$ on $G=\{0,1\}$ by
\begin{equation}\label{specific}
    p(1 | \mathbf{w}^{-1}_{-\infty})= p_1 \{1 - c \sum_{i=1}^{\infty}(\beta (i) \mathbf{1}_{\{ w_{-i} =0, T (  \mathbf{w}^{-1}_{-\infty} )> i\}}
    +\gamma (i) \mathbf{1}_{\{ w_{-i} =0, T (  \mathbf{w}^{-1}_{-\infty} )\leq i \}}  )  \},
\end{equation}
where
\begin{equation}\label{arresto}
    T (  \mathbf{w}^{-1}_{-\infty} ) = \inf \left \{k : \frac{ \sum_{i=1}^{k} w_{-i} }{ k } \geq \sigma \right
    \}.
\end{equation}
First of all we prove that the kernel $p$ is monotone, which means
that $p(1 | \mathbf{w}^{-1}_{-\infty})$ is increasing in
$\mathbf{w}^{-1}_{-\infty}$ w.r.t. the pointwise order. For this
notice that $w_{-i}\geq \eta_{-i}$, for $i \in \mathbf{N}_+$ implies
$T(\mathbf{w}^{-1}_{-\infty})\leq T(\mathbf{\eta}^{-1}_{-\infty})$,
hence
$$
p(1 | \mathbf{w}^{-1}_{-\infty})\geq p_1 \{1 - c
\sum_{i=1}^{\infty}( \beta (i) \mathbf{1}_{\{ {\eta}_{-i} =0,
T(\mathbf{w}^{-1}_{-\infty})>i \}}
    +\gamma (i) \mathbf{1}_{\{ {\eta}_{-i} =0, T(\mathbf{w}^{-1}_{-\infty})\leq i \}}  )
$$
$$
\geq p_1 \{1 - c \sum_{i=1}^{\infty}( \beta (i)
\mathbf{1}_{\{{\eta}_{-i} =0, T({\mathbf{\eta}}^{-1}_{-\infty})>i
\}}
  +\gamma (i) \mathbf{1}_{\{ {\eta}_{-i} =0, T({\mathbf{\eta}}^{-1}_{-\infty})\leq i \}}
    )= p(1 | {\mathbf{\eta}}^{-1}_{-\infty}),
$$
where the second inequality is due to the fact that $ \beta (i )
\geq \gamma (i ) $, for $i \in \mathbf{N}_+$.

Since $a_0(1)=p_1(1-c\sum_{i=1}^\infty \beta (i))>0$ and
$a_0(0)=1-p_1>0$ it follows that $\mathcal{H}=G^{-\mathbf{N}_+}$. As
a consequence
\begin{equation}\label{asdf2}
  a_k (0 , \mathbf{w}^{-1}_{-k})  = \inf \{ p (0| \mathbf{w}^{-1}_{-k},
    \mathbf{z}^{-k-1}_{-\infty}) : \mathbf{z}^{-k-1}_{-\infty} \in \{0,1\}^{-\mathbf{N}_+}\}
   = p(0 | \mathbf{w}^{-1}_{-k}, \mathbf{1}^{-k-1}_{-\infty}  ) 
\end{equation}
and
\begin{equation}\label{asdf}
  a_k (1, \mathbf{w}^{-1}_{-k})  = \inf \{ p (1| \mathbf{w}^{-1}_{-k},
    \mathbf{z}^{-k-1}_{-\infty}) : \mathbf{z}^{-k-1}_{-\infty} \in \{0,1\}^{-\mathbf{N}_+}\}
   = p(1 | \mathbf{w}^{-1}_{-k}, \mathbf{0}^{-k-1}_{-\infty} )
\end{equation}
therefore
\begin{equation}\label{akw}
a_k ( \mathbf{w}^{-1}_{-k}) =p(0 | \mathbf{w}^{-1}_{-k},
\mathbf{1}^{-k-1}_{-\infty} ) +p(1 | \mathbf{w}^{-1}_{-k},
\mathbf{0}^{-k-1}_{-\infty} )=1-p (1| \mathbf{w}^{-1}_{-k},
\mathbf{1}^{-k-1}_{-\infty})+p(1 | \mathbf{w}^{-1}_{-k},
\mathbf{0}^{-k-1}_{-\infty} )
\end{equation}
which by a direct computation is seen to assume only the values $ 1
- p_1 c \sum_{i=k+1}^\infty \gamma (i)$, when $T
(\mathbf{w}^{-1}_{-\infty} ) \leq k$, and $ 1 - p_1 c
\sum_{i=k+1}^\infty \beta (i)$, otherwise. Notice that the condition
$T (\mathbf{w}^{-1}_{-\infty} ) \leq k$ can be verified by looking
only at $\mathbf{w}^{-1}_{-k}$.

Now we prove that this class of kernels can be perfectly simulated.
In fact we can prove that the conditions given in Corollary
\ref{teo2} are satisfied for the sequence of events
 \begin{equation}\label{Cn}
    C_n = \left \{
    (u_{n-1} , \ldots, u_0 )\in [0,1)^n : \frac{1 }{n}
     \sum_{k=0}^{n-1} \mathbf{1}_{\{ a_0 (0) \leq      u_k < a_0 (0)
    + a_0 (1)\}} \geq \sigma.
    \right \}
\end{equation}
Since $a_0 (1)> \sigma $, from Chernoff's bound
$$
P( (U_{n-1} , \ldots, U_0 )\notin C_n ) \leq e^{-K n},
$$
for some $K>0$, therefore
$$
P(N \leq n_0) = P ( \cap_{n=n_0}^{\infty} \{  (U_{n-1} , \ldots, U_0
)\in C_n \} ) \geq 1 -  \sum_{n=n_0}^{\infty} e^{-K n}= 1 -
\frac{e^{-Kn_0}}{ 1-e^{-K } }
$$
from which the existence of the probability generating function of
$N$, for some $s>1$, is deduced,. From (\ref{akinf}) and the
summability of $\beta (i )$, it is obtained that $a_{\infty}=1$,
which ensures that condition 2) of Corollary \ref{teo2} is
satisfied. Finally observe that whenever $ (U_{n-1} , \ldots, U_0
)\in C_n $
$$
A_n (U_{n-1} , \ldots, U_0 ) \geq \inf \{ a_n (0 |
\mathbf{w}^{-1}_{-n})+a_n (1 | \mathbf{w}^{-1}_{-n}) : T
(\mathbf{w}^{-1}_{-\infty} ) \leq n  \} = 1 - p_1 c \sum_{i=n
+1}^{\infty} \gamma (i ) ,
$$
therefore, by definition (\ref{alfah}), we get $ \alpha_n   \geq 1 -
p_1 c \sum_{i=n +1}^{\infty} \gamma (i )$. Now, being
$$
\sum_{n=0}^{\infty}\sum_{i=n+1}^{\infty}  \gamma (i
)=\sum_{i=1}^{\infty} i \gamma (i)< \infty
$$
we get by \cite{Wi}, page 40, that $\prod_{n=1}^{\infty}\alpha_n>0$
, so that condition 1) is also satisfied.

On the other hand, for some choices of $ \{ \beta (i)\} $ and $\{
\gamma (i)\}$, condition a) in Proposition \ref{th0} fails. For
example consider $\beta(i)=i^{-\alpha}$, with $\alpha \in (1,2)$ and
$\gamma (i)= 2^{-i}$, which ensure that $\beta(i) \geq \gamma (i)$,
for $i \in \mathbf {N}_+$, and $\sum_{i=1}^{\infty} i \gamma (i) <
\infty$. Since
\begin{equation}\label{akinf}
    a_k =  \inf \{ a_k ( \mathbf{w}^{-1}_{-k}) :  \mathbf{w}^{-1}_{-k}\in \{0,1\}^k \}  =  1 - p_1 c
\sum_{i=k+1}^\infty \beta (i) ,
\end{equation}
we can show that $ \sum_{k=1}^\infty  \prod_{i=1}^k  a_i < \infty $.
In fact
$$
\sum_{k=1}^\infty  \prod_{i=1}^k  a_i = \sum_{k=1}^\infty
\prod_{i=1}^k  \left (1 - p_1 c \sum_{i=k+1}^{\infty}
\frac{1}{i^\alpha} \right ) \leq \sum_{k=1}^\infty \prod_{i=1}^k
\left (1 - \frac{L_1}{i^{\alpha -1}} \right ) ,
$$
where $L_1 >0$ is a sufficiently small constant. The rightmost
expression is smaller than
$$
\sum_{k=1}^\infty \exp ({   \sum_{i=1}^k  -\frac{L_1}{i^{\alpha -1}}
} ) \leq C\sum_{k=1}^\infty \exp (-L_2 k^{2-\alpha}) <\infty
$$
where $C$ and $L_2$ are suitable positive constants, which implies
the promised inequality.

\end{example}

We conclude the section by observing that the idea of defining the
information depth by computing the infimum of $a_h(
\mathbf{w}^{-1}_{-h} )$ over the set $J_h(U_{-1}, \ldots,U_{-h} )$
of histories compatible with the observed $U_{-1}, \ldots,U_{-h} $
can be pushed further. For example, by looking at adjacent pairs  $
(U_{-i} , U_{-i+1} )$, $i = 2, \ldots , h $, it is possible to
locate other states, restricting the set of histories compatible
with the observed $U_{-1}, \ldots,U_{-h} $ to the smaller subset
$$
 J'_h  (U_{-1}, \ldots,U_{-h} )= J_h  (U_{-1}, \ldots,U_{-h} )
                         \cap F_h  (U_{-1}, \ldots,U_{-h} ) ,
$$
where $  F_h  (U_{-1}, \ldots,U_{-h} ) $ is equal to
$$
\left \{    \mathbf{w}^{-1}_{-\infty}\in \mathcal{H} :
    w_{-k} = g_1 ,   w_{-k+1} = g_2,  \hbox{ if }
U_{-k} \in B_0(g_1),  \,\,\,       U_{-k+1} \in B_1(g_2| g_1), g_1,
g_2 \in G,\,\,        2 \leq   k\leq h \right \} .
$$
The changes to Proposition \ref{infod} and Theorem \ref{rinnov} are
minor, but for the sake of brevity, we do not pursue this extension
further.

\section{An algorithm which works without minorization condition} \label{sec4}

In this section we explore the possibility of defining a backward
coalescence time $\tau_0$ when $a_0 =0$. In this case any
information depth takes necessarily positive values, hence it cannot
be used for defining a backward coalescence time. However it is
assumed $a_1 >0$.
Since $a_1 \leq a_1 ( w_{-1}) =\sum_{ g \in G } a_1(g | w_{-1} ) $
for any $w_{-1} \in G $, the maximal coupling function $f (u,
\mathbf{w}^{-1}_{-\infty})$ depends only on $w_{-1}$, whenever $u <
a_1$. Accordingly, we say that the simulation process is in the {\it
markovian regime} at time $n$ whenever $U_n<a_1$. This means that
the information needed to compute the state of the process at time
$n$ concerns only the state at time $n-1$. For any $ u \in [0,1)$
and $w \in G $ we define
\begin{equation}  \label{markov}
\tilde{f} \left (u| w \right ):= f (a_1u |
\mathbf{w}^{-1}_{-\infty}),
\end{equation}
for any choice of $\mathbf{w}^{-1}_{-\infty} \in \mathcal{H}$ having
$ w_{-1} =w$. Thus $\tilde{f}: [ 0, 1) \times G \to G $, applied to
a uniform random variable in $[ 0, 1)$, induces the Markov kernel
\begin{equation}\label{kern}
M(g|w)=|\{u \in [0,1):\tilde {f}(u|w)=g\}|,\,\,\, g,w \in G,
\end{equation}
where $| \cdot |$ denote the Lebesgue measure.

By induction, for $n\geq 2 $, we define the composition
$\tilde{f}^{(n)}: [ 0, 1)^n \times G \to G $ as
\begin{equation}  \label{comp}
\tilde{f}^{(n)} ( u_n , \dots, u_1|w ) =\tilde{f} ( u_n | \tilde{f}%
^{(n-1)} ( u_{n-1} , \dots, u_1 | w) ), \,\,\, u_i \in [0,1), \,\,\,
i =1, \dots, n , \,\,\, w \in G
\end{equation}
where $\tilde{f}^{(1)} = \tilde{f}$.

Concerning the markovian regime, for any $n \in \mathbf{N}_+ $, we
define the {\it coalescence in the interval $[-n+1,0 ]$} as
\begin{equation}  \label{coup}
E _n = \{(u_0,u_{-1},\ldots,u_{-n+1})\in [0,1)^n :\, \tilde{f}^{(n)}
( u_0 , \dots, u_{-n+1} | w ) = \tilde{f}^{(n)} ( u_0 , \dots,
u_{-n+1} | g_0) , \forall w \in G \},
\end{equation}
where $g_0 \in G$ is an arbitrary state.


We notice that if the kernel $p$ is markovian, then $a_1=1$ and
conversely. In this case $\tilde{f}=f$ and any backward coalescence
time has the property that $\tau_0=-m$ implies that
$(U_0,\ldots,U_{-m+1}) \in E_m$, as in the original CFTP algorithm
\cite{PW}.


Next assume that $a_k \uparrow 1 $ as $k \to \infty$ and recall that
in this case $K$, as  defined in (\ref{kappa}), takes finite values.
Define the random variable $ {\tau }_0$ as
\begin{equation}  \label{b}
\sup\{m<0: \exists l\in [m,0], \hbox{ s.t. }
a_1^{-1}(U_l,\ldots,U_{m})\in E_{l-m+1}, \,\,\, 
\& \,\,\, K(U_j) \leq j-l, j\in [l+1,0 ] \hbox{ if } l <0 \} .
\end{equation}
\begin{proposition}
If the random variable $\tau_0$ is finite almost surely, it is a
backward coalescence time.
\end{proposition}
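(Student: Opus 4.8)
The plan is to verify the two defining properties H1 and H2 of a backward coalescence time for the candidate $\tau_0$ given in (\ref{b}), the almost sure finiteness being part of the hypothesis. Write $\tau_0=m$ with $m<0$, and call a pair $(m,l)$ with $l\in[m,0]$ \emph{admissible} when $a_1^{-1}(U_l,\ldots,U_m)\in E_{l-m+1}$ and, if $l<0$, $K(U_j)\leq j-l$ for every $j\in[l+1,0]$; thus $\tau_0$ is the supremum of those $m$ that admit some witness $l$.

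For H1, note that for a fixed pair $(m,l)$ the first condition is read through $U_m,\ldots,U_l$ only (in particular it forces $U_j<a_1$ for $j\in[m,l]$, since otherwise $a_1^{-1}U_j\geq 1$ and the rescaled vector cannot lie in $E_{l-m+1}\subseteq[0,1)^{l-m+1}$), while the second is read through $U_{l+1},\ldots,U_0$; hence the event ``$(m,l)$ is admissible'' lies in $\mathcal{F}^0_m$. Taking the finite union over $l\in[m,0]$ shows that $\{m\text{ admits a witness}\}\in\mathcal{F}^0_m$. Since $\{\tau_0=-l\}$ is the intersection of $\{-l\text{ admits a witness}\}$ with the events $\{m'\text{ admits no witness}\}$ for $m'=-l+1,\ldots,-1$, and each of the latter is measurable with respect to $\mathcal{F}^0_{m'}\subseteq\mathcal{F}^0_{-l}$, we conclude $\{\tau_0=-l\}\in\mathcal{F}^0_{-l}$. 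Together with the assumed finiteness this gives H1.

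For H2, fix a realisation with $\tau_0=m$ and select a witness $l^*\in[m,0]$. I would first exploit the markovian regime on the block $[m,l^*]$. Since $U_j<a_1$ for $j\in[m,l^*]$, (\ref{markov}) makes $f(U_j|\cdot)$ depend only on the immediately preceding state, so for every admissible history $\mathbf{w}^{m-1}_{-\infty}\in\mathcal{H}$ the forward iteration coincides with the Markov recursion: by induction
\begin{equation*}
f^{(l^*-m+1)}(U_{l^*},\ldots,U_m|\mathbf{w}^{m-1}_{-\infty})=\tilde{f}^{(l^*-m+1)}(a_1^{-1}U_{l^*},\ldots,a_1^{-1}U_m|w_{m-1}).
\end{equation*}
Because the rescaled vector lies in $E_{l^*-m+1}$, the right-hand side takes one and the same value $g^*\in G$ for all $w_{m-1}\in G$, so the state produced at time $l^*$ does not depend on $\mathbf{w}^{m-1}_{-\infty}$. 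It then remains to propagate this coincidence forward on $[l^*+1,0]$ through the information depths, which is exactly the induction in the proof of Proposition \ref{acca}: the relation $K(U_j)\leq j-l^*$ plays the role there played by $K_{-i+1}\leq 1,\ldots$, the base case $K_{-i}=0$ being replaced here by the history-independence of $g^*$ at time $l^*$ just established. The intermediate states $X_m,\ldots,X_{l^*-1}$ may still depend on $\mathbf{w}^{m-1}_{-\infty}$, but they do not enter the computation of the states after $l^*$, whose dependence on the past reaches back no further than $l^*$. One thus obtains that $f^{(-m+1)}(U_0,\ldots,U_m|\mathbf{w}^{m-1}_{-\infty})$ is the same for every $\mathbf{w}^{m-1}_{-\infty}\in\mathcal{H}$, which is precisely (\ref{lab}) with $l=-m$; when $l^*=0$ this second stage is vacuous and the conclusion already follows from the markovian step alone. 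This establishes H2 and hence the claim.

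I expect the main obstacle to be this first stage of H2: one must check carefully that, throughout the markovian block, the iteration of the full coupling function $f$ really reduces to the one-dimensional recursion driven by $\tilde{f}$, so that the coalescence event $E_{l^*-m+1}$ — which is defined purely in terms of $\tilde{f}$ and quantifies over all $w\in G$ — forces the state of the genuine process at time $l^*$ to be independent of the entire history prior to $m$. The clean interface at time $l^*$ between this markovian coalescence and the subsequent CFF-type propagation is the delicate point, and it is also where one should record that $\tilde{f}(\cdot|w_{m-1})$ is well defined for the final state of any admissible history.
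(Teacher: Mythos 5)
Your proposal is correct and follows essentially the same route as the paper: H1 from the fact that admissibility of a witness pair $(m,l)$ is read off $U_m,\ldots,U_0$, and H2 by first reducing the iteration of $f$ on the markovian block $[m,l^*]$ to the iteration of $\tilde f$ (so that the coalescence event $E_{l^*-m+1}$ makes the state at time $l^*$ history-independent), and then propagating forward on $[l^*+1,0]$ exactly as in the induction of Proposition \ref{acca}, with the markovian coalescence replacing the base case $K_{-i}=0$. Your treatment of H1 is in fact more explicit than the paper's one-line appeal to the definition, and the point you flag as delicate (that $E_n$ is defined via $\tilde f$ while H2 concerns $f$) is precisely the step the paper handles through relation (\ref{markov}).
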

\begin{proof}
By definition,  for any $m \in -\mathbf{N}$, the event ${\tau }_0=m$
belongs to the $\sigma$-algebra $\mathcal {F}^{0}_{m}$, which proves
H1. Moreover, if this event is realized the process is in the
markovian regime from time $m$ to some larger time $l$ in which
coalescence has taken place. This means that
$$
\tilde f^{(l-m+1)}\left(\frac {U_l}{a_1},\ldots,\frac
{U_m}{a_1}|w\right) =\tilde f^{(l-m+1)}\left(\frac
{U_l}{a_1},\ldots,\frac {U_m}{a_1}|g_0\right)
$$
for any $w \in G$. By the relation (\ref{markov}) this means that,
for any $\mathbf{w}^{-1}_{-\infty} \in \mathcal{H}$, it holds
\begin{equation}\label{secondopasso}
f^{(l-m+1)}(U_l,\ldots,U_m|\mathbf{w}^{-(m+1)}_{-\infty}) =
f^{(l-m+1)}(U_l,\ldots,U_m|\mathbf{g}^{-(m+1)}_{-\infty})
\end{equation}
Thus, if $l=0 $, H2 holds. If $l <0$ one needs to repeat the proof
of Proposition \ref{acca} replacing (\ref{primopasso}) with
(\ref{secondopasso}). In short, to compute all the states of the
process in the interval $[l+1,0]$, there is no requirement about the
states of the process prior to time $l$.
\end{proof}

After this result, we turn our interest to give sufficient
conditions for the a.s. finiteness of $\tau_0$.

\begin{theorem}
\label{co} Under the assumptions
\begin{itemize}
    \item [(i)] $\sum_{n=1}^{\infty} \prod_{m=1}^{n} a_m = \infty $;
    \item [(ii)] there exists $s\in \mathbf{N}_+$ such
that $ P((U_{s-1},\ldots,U_0) \in E_s)>0$;
\end{itemize}
${\tau }_0$ is finite a.s.
\end{theorem}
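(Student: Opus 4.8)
The plan is to prove that, almost surely, the set over which the supremum defining $\tau_0$ is taken is non-empty; since every candidate $m$ is a negative integer bounded above by $-1$, non-emptiness already yields $\tau_0>-\infty$. I will fix once and for all an integer $s$ supplied by assumption (ii), so that $P((U_{s-1},\ldots,U_0)\in E_s)>0$ for $E_s$ as in (\ref{coup}), and I will exhibit (a.s.) an anchor time $l\le -1$ together with the block $m=l-s+1$ for which both defining requirements hold: the markovian coalescence $(U_l/a_1,\ldots,U_m/a_1)\in E_s$ and the CFF requirement $C_l:=\{K(U_j)\le j-l,\ j\in[l+1,0]\}$. The decisive structural point is that these two events read off disjoint portions of the driving sequence: $C_l$ depends only on $U_{l+1},\ldots,U_0$, while the coalescence event depends only on $U_m,\ldots,U_l$. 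Moreover, changing variables $v=u/a_1$ (and using $a_1>0$ throughout this section), the coalescence event has probability $q:=a_1^{\,s}\,P((U_{s-1},\ldots,U_0)\in E_s)>0$, the same for every $l$.

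\textbf{The CFF part, via (i).} Writing $K_j:=K(U_j)$ and noting $K_j\ge 1$ because $a_0=0$, I pass to the shifted sequence $\hat a_k:=a_{k+1}$ (so $\hat a_0=a_1>0$ and $\hat a_k\uparrow 1$) with depth $\hat K_j:=K_j-1\ge 0$; a direct check shows that $\hat K$ is exactly the function (\ref{kappa}) built from $\hat a$, and that $C_l=\{\hat K_j\le j-(l+1),\ j\in[l+1,0]\}$ is precisely the CFF reach-back event of Section \ref{sec3} for the sequence $\hat a$ with base time $l+1$. Since assumption (i) is identical with $\sum_{k\ge 0}\prod_{j=0}^{k}\hat a_j=\infty$, Proposition \ref{th0}(a) applied to $\hat a$ gives that $C_l$ holds for at least one $l\le -1$, almost surely. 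This backward reach-back is governed by the Markov recursion $G_{j-1}=\max(G_j-1,\hat K_{j-1})$, for which $C_l$ is the event $\{G_{l+1}=0\}$; hitting the state $0$ once a.s., together with the restart property of this recursion at the (backward) hitting time, upgrades the conclusion to: $C_l$ holds for infinitely many $l\le -1$, almost surely.

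\textbf{Combining the two mechanisms.} Let $\theta^{(1)}>\theta^{(2)}>\cdots$ be the successive times $t\le 0$ with $C_{t-1}$ holding, thinned recursively by $\theta^{(k+1)}=\sup\{t\le \theta^{(k)}-s:\,C_{t-1}\}$; by the preceding paragraph each is finite a.s., and each is a stopping time for the backward filtration $\{\mathcal{F}^0_{t}\}_{t\le 0}$, $\mathcal{F}^0_t=\sigma(U_t,\ldots,U_0)$. The blocks $B_k:=[\theta^{(k)}-s,\theta^{(k)}-1]$ are then pairwise disjoint and lie strictly below $\theta^{(k)}$. Let $A_k$ be the coalescence event on $B_k$. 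By the strong Markov property of the i.i.d. sequence, $P(A_k\mid\mathcal{F}^0_{\theta^{(k)}})=q$, while the spacing $\theta^{(k+1)}\le\theta^{(k)}-s$ forces $B_1,\ldots,B_{k-1}$ to lie above $\theta^{(k)}$, so $A_1,\ldots,A_{k-1}$ are $\mathcal{F}^0_{\theta^{(k)}}$-measurable. Hence $P(\bigcap_{j\le n}A_j^c)=(1-q)^{\,n-1}P(A_1^c)\to 0$, so some $A_k$ occurs a.s. On that event, $l=\theta^{(k)}-1$ and $m=\theta^{(k)}-s$ form an admissible pair, giving $\tau_0\ge \theta^{(k)}-s>-\infty$.

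\textbf{Expected obstacle.} The hard part is exactly the interplay of the two mechanisms: the variables $U_m,\ldots,U_l$ carrying a coalescence attempt are also used further back to locate later anchors, so the attempts $A_k$ are genuinely dependent and a naive Borel--Cantelli fails. The remedy is to work entirely along the backward filtration and to use (a) the strong Markov property at the stopping times $\theta^{(k)}$, which keeps the current block fresh, and (b) the enforced spacing, which renders all earlier attempts measurable at the moment the $k$-th attempt is made. A secondary delicate point is promoting the single-hit conclusion of Proposition \ref{th0}(a) to the infinitely-often statement, which is where the restart structure of the gap recursion $G_{j-1}=\max(G_j-1,\hat K_{j-1})$ is essential.
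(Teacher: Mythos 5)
Your proof is correct and follows essentially the same strategy as the paper's: both use the shift $a_k\mapsto a_{k+1}$ (equivalently $K\mapsto K-1$) to invoke Proposition \ref{th0}(a) under assumption (i), exploit the restart/renewal structure of the resulting CFF anchors to obtain infinitely many of them, and attach below each anchor a markovian block on which coalescence is attempted with a fixed positive conditional probability, concluding by conditional independence of the attempts at the successive backward stopping times. The only differences are organizational: the paper takes as blocks the maximal runs $\{U_n<a_1\}$ of geometric length found by scanning backward from each anchor $W_i$ and invokes the regenerative structure of $\{-W_i\}$, whereas you use fixed-length-$s$ blocks and enforce disjointness by thinning the anchors; both devices yield the same conclusion.
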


\begin{proof}
We start by defining the sequences $ \{W_n , n = 1,2 \dots \}$, $
\{Y_n , n = 1,2, \dots \}$ which will be proved to be finite a. s.
First define
$$
W_1=\sup\{m \leq 0 :K(U_j)-1\leq j-m, j\in [m,0]\}.
$$
By Proposition \ref{th0}, part (a), condition (i) guarantees that
$W_1$ is a.s. finite: notice indeed that replacing $K(U_j)$ with
$K(U_j)-1$ has the effect of shifting the sequence $\{a_j, j \in
\mathbf{N}\}$ to the left. Next define
$$
Y_i = \inf \{m < W_{i} : U_n <a_1, n\in[m+1,W_{i}] \}  , \,\,\,\,
  W_{i+1}= \sup\{m\leq Y_i:K(U_j)-1\leq j-m, j\in [m,Y_i]\},
$$
which are a.s. finite, for $ i\in \mathbf{N}_+$. It is immediately
seen that $\{W_i - Y_i -1\}_{i\in\mathbf{N}_+}$ is a sequence of
i.i.d. geometric random variables, with success probability $1-a_1$.
Likewise $\{W_{i+1} -Y_i\}_{i\in\mathbf{N}_+}$ is a sequence of
i.i.d. random variables distributed as $W_1$, conditional to be non
zero. Moreover the two sequences are mutually independent and
independent of $W_1$. In particular the sequence $\{-W_i, i \in
\mathbf{N}_+\}$ form a delayed renewal process and the sequence
$\{U_{-n}, n \in \mathbf{N}\}$ is regenerative w.r.t. it.

Finally define the random index
\begin{equation}\label{alal}
   Q=\inf \{i\in \mathbf{N}_+: (U_{W_i-1},\ldots,U_{Y_i+1}) \in
E_{W_i-Y_i-1}\}
\end{equation}
and let $\tau^*=Y_Q$.

For each $j \in [W_i , 0 ]$ the condition $ j - K(U_j ) \geq W_i -1
$ is satisfied, for any $i \in \mathbf{N}_+$. By consequence $
\tau^* $ differs from ${\tau }_0 $ only because the supremum is
taken on the set $ \{Y_i:{i \in \mathbf{N}_+} \}$ rather than on the
whole set of negative integers. In fact notice that, for $n
=\tau^*=Y_Q $, one can always choose in (\ref{b}) $ l = W_Q -1$.
Therefore $\tau^* \leq {\tau }_0$, so it is enough to prove that
$\tau^*> - \infty$ a.s. But this is true because, by assumption
(ii), the condition at the r.h.s. of (\ref{alal}) is fulfilled with
positive probability in any regenerating cycle: an application of
the law of large numbers concludes the proof.

\comment{
======================================================================
Next we
proceed to prove the second assertion of the theorem. Let $\{
U'_{i,j}: i,j \in \mathbb{N}^+ \} $ be a two dimensional array of
i.i.d. random variables uniformly distributed in $[0,1)$,
independent of $\{U_n : n \in - \mathbb{N}\}$. We replace the vector
$ \{ U_{ W_i -1}   , \dots , U_{ Y_i }\}$ with $\{ U'_{ i ,1}   ,
\dots ,  U'_{i, L'_i }\} $ where $ L'_i=\min \{ j \in \mathbb{N}^+:
U'_{i, j } > a_1 \}$. Correspondingly replace $Y_i $ with  $Y'_i
=W_i -L'_i$. Likewise replace the sequences $ \{W_n , n = 1,2 \dots
\}$, $ \{Y_n , n = 1,2, \dots \}$ with  $ \{W'_n , n = 1,2 \dots
\}$, $ \{Y'_n , n = 1,2, \dots \}$, where $W'_0= W_0$ and
$$
Y'_i = W'_i- L'_i , \,\,\,\,
  W'_{i+1}= \sup\{m\leq Y'_i:j-K(U_j)+1\geq m, j\in [m,Y'_i]\}
$$
for $i\in \mathbf{N}_+$. Analogously the random variable $N $,
implicitly defined in (\ref{alal}), is replaced by $N'$.

It is clear that the joint distribution of the two sequences is
unchanged, so we are allowed to consider the stopping time
\begin{equation}\label{prim}
{ \tau^*}' =Y'_{N'}   = W'_1  +\sum_{i=2}^{N'} (W'_i- Y'_{i-1})
-\sum_{i=1}^{N'} L'_i  ,
\end{equation}
written as a random sum of independent random variables.

In order to overcome the dependence between $N'$ and the sequence
$\{L'_i, i \in \mathbb{N} \}$ we use the bounds
$$
N'\leq N'':=\inf\{n: (U'_{n,1}, \dots,U'_{n,s} ) \in E_s \},
\,\,\,\,\, L'_i\leq L''_i:=s+\inf\{j>s:U'_{i,j}>a_1\}.
$$
By consequence
\begin{equation}\label{ops}
   { \tau^*}'\leq \sum_{i=1}^{N''} [(W'_i-Y'_{i-1})-L''_i ]
\end{equation}
 where we set $ Y'_0=0 $. Notice that $N'' $ is
independent of the sequence $ \{(W'_i- Y'_{i-1})-L''_i \}_{i \in
\mathbb{N}^+}$. The latter is made by independent random variables
which are identically distributed except for the first one. Thus
setting $ V_1 =W'_1-L''_1  $ and  $ V_i =W'_i- Y'_{i-1}-L''_i $ for
$i = 2,3 , \dots $ we can rewrite the r.h.s. of (\ref{ops}) as $V_1
+ \sum_{i=2}^{N''} V_i  $.

The assumptions of the theorem allow to apply the results of
\cite{CFF} which states that $W'_1$ and $W'_2- Y'_{1} $ have
exponential tail. Moreover the same is obviously true for the random
variable $ L''_1$, which has a translated geometric distribution.
Therefore $ V_1 $ and $V_2 $ have exponential tail as well. By
standard probability generating function arguments, the same is true
for $V_1 + \sum_{i=2}^{N''} V_i  $ since $N'' $ has geometric
distribution.}
\end{proof}

\comment{=======================================================
Next we discuss the relation between the a.s. finiteness of $\tilde
{\tau}$ and perfect simulation. The argument mimics that given in
Proposition \ref{perfetto}. First define ${\tilde {\eta}}$ as the
minimum value of $l \in [\tilde \tau,0]$ for which
$$
a_1^{-1}(U_l,\ldots,U_{\tilde {\tau}+1})\in E_{l-\tilde
{\tau}},\,\,\,\, j-K(U_j) \geq l, j\in [l+1,0 ].
$$
As we did in (\ref{zorro}) we can define the random variable $\tilde
h$ as
\begin{equation}\label{ht}
\tilde h_{m+1}(U_0,\ldots, U_{-m})=f^{(m+1)} ( U_0, \ldots, U_{-m}|
{\mathbf{ z}}^{-(m+1)}_{-\infty}) \mathbf{1}_{\{    \tau_0  \geq -m
\}}(U_0, \ldots, U_{-m})
\end{equation}
where $\mathbf{z}^{-1}_{-\infty} $ is any given element of  $G^{-
\mathbf{N}_+  }$. The fact that the r.h.s. of (\ref{ht}) does not
depend on the choice of $\mathbf{z}^{-(m+1)}_{-\infty}$ is due to
the fact that for the markovian regime started at time $\tilde \tau$
coalescence has taken place at time ${\tilde {\eta}}$, hence
$$
f^{(\tilde \eta - \tilde \tau+1)} ( U_{\tilde \eta}, \ldots,
U_{\tilde \tau}| \mathbf{z}^{-(\tilde \tau+1)}_{-\infty})=X_{\tilde
\eta}
$$
does not depend on $\mathbf{z}^{-(\tilde \tau+1)}_{-\infty}$. From
time ${\tilde {\eta}}$ onwards the following recursion can be
implemented
\begin{equation}\label{recurs3}
 X_i = f ( U_i | X_{i-1}, \ldots , X_{\tilde \eta} ,  \mathbf{ v
}_{-\infty}^{ \tilde \eta -1} ), \,\,\, i =  \tilde \eta+1 , \ldots
, 0 .
\end{equation}

Now set $\tilde {\tau}_0=\tilde {\tau},  {\tilde {\eta}}_0= {\tilde
{\eta}}$ and define $\tilde {\tau}_n, {\hat {\tau}}_n, n \in
\mathbf{Z}$ by translation. We are in a position to define the
process
\begin{equation}\label{psi}
X_n=[\Psi(\mathbf{U})]_n=\sum_{k \in \mathbf{N}}h_{k+1}(U_n, \ldots,
    U_{n-k})\mathbf{1}_{\{ \tilde   \tau_n =n-k
\}}( \mathbf{U}^n_{-\infty} ).
\end{equation}
Notice that the definition (\ref{psi}) is coherent with
(\ref{recurs3}).
\begin{theorem}
\label{carcere} Under the assumption that $\tilde \tau_0$ is a.s.
finite, the process $\mathbf{X}=(X_n,n\in \mathbf{Z})$
    is the unique process compatible with the kernel $p$ (in law).
    Moreover it is stationary.
\end{theorem}

\begin{proof}
Since
$$
X_0=f(U_0|X_{-1},\ldots,X_{\tilde \eta},v^{\tilde \eta-1}_{-\infty})
$$
for any $v^{\tilde \eta-1}_{-\infty}$. Next condition to
$X_{-i}=x_{-i}, i \in \mathbf{N}_+$ and substitute $v^{\tilde
\eta-1}_{-\infty}=x^{\tilde \eta-1}_{-\infty}$. We thus have
$$
X_0=f(U_0|x_{-1},\ldots,x_{\tilde \eta},x^{\tilde \eta-1}_{-\infty})
$$
and since $U_0$ is independent of all the $X_{-i}$'s, $i \in
\mathbf{N}_+$, which are function of $U_{-i}, i \in \mathbf{N}_+$,
the correct kernel is obtained. Uniqueness is established with the
same argument as before. Stationarity is guaranteed by construction.

\end{proof}
==================================================================}

In the previous theorem we have not assumed that $a_0=0$. However,
in this case the result does not add anything to the statement (a)
in Proposition \ref{th0}. Indeed, if $U_0$ is uniformly distributed
in $[0,1)$, $P(U_0 \in E_1)>0$ if $a_0>0$: assumption (ii) is always
satisfied. Therefore, in the following we will always take $a_0=0$.

Assumption (ii) in the previous theorem states that the markovian
coupling function $\tilde {f}$ defined in (\ref{markov}) is {\it
successful} for the perfect simulation of the Markov chain with
kernel $M$ given in (\ref{kern}), in the sense that backward
coalescence occurs with probability $1$. Since this implies the
convergence in law of the chain as time increases, it is necessary
that $M$ has a single positive recurrent irreducible class which is
aperiodic.

When $G$ is finite, which is assumed from now on, this condition can
be directly referred to the oriented graph induced by $M$. Notice
that if $ (w,g)$ is an arc of this graph then necessarily $a_1 (g |
w)>0$. However the converse is not true. In fact if $ B_1(g| w) $ is
non empty and it is disjoint from $[0, a_1)$ then $(w , g ) $ is not
an arc. In this case, if $\mathbf{w}^{-1}_{-\infty}$ is such that
$w_{-1} =w$, the maximal coupling function can be replaced by a new
coupling function $ \bar f(u|\mathbf{w}^{-1}_{-\infty})$ which is
different only for $u <a_1 (w)$. Each  interval $B_1 (h| w)$ is
replaced by the union of two disjoint intervals $B^{1}_1(h|w)$ and
$B^{2}_1(h|w)$, where  $ \bar f(\cdot |\mathbf{w}^{-1}_{-\infty})$
takes the value $h$.

We require that
\begin{equation}\label{arrange}
|B^{1}_1(h|w)|+|B^{2}_1(h|w)|=
|B_1(h|w)|=a_1(h|w)=|\{u<a_1(w):f(u|\mathbf{w}^{-1}_{-\infty})=h\}|
\end{equation}
and $B^{1}_1(h|w)$ intersects the interval $[0,a_1)$. Therefore the
Markov kernel $\bar M$ induced by $\bar f$ satisfies
\begin{equation}\label{graph}
a_1(g|w)>0  \Leftrightarrow \bar {M}(g|w)>0.
\end{equation}

However, backward coalescence w.p. $1$ cannot be ensured only by
properties of the Markov kernel, without reference to the coupling
function. A simple counterexample is presented in \cite{Hag}. But
when the state space is finite, there is a universal modification of
a Markov coupling function, which ensures backward coalescence w.p.
$1$ under the only assumption that the induced kernel has a unique
irreducible class which is aperiodic (see Proposition 8.1 p. 122 in
\cite{AG}). The modification consists in letting the different
trajectories move independently before merging.

This is more clearly explained by allowing coupling functions to
depend on $n+1$ variables $ (u^0, \dots , u^n) \in [0,1)^{n+1}$,
rather than a single variable $u \in [0,1)$; in the definition just
replace intervals by hypercubes or more general Borel sets. The
modified coupling function $\bar f $ introduced before is replaced
by
 $\hat{f}:[0,1)^{|G|+1}\times \mathcal{H} \to G$, defined as
\begin{equation}\label{vector}
\hat{f}(u^0; u^g, g \in
G|\mathbf{w}^{-1}_{-\infty})=\left\{\begin{array}{cc}
\bar f(u^0|\mathbf{w}^{-1}_{-\infty}), & u^0\geq a_1, \\
\bar f(a_1 u^{w_{-1}} |\mathbf{w}^{-1}_{-\infty}), & u^0< a_1, \,\,\, w_{-1} \in G. \\
\end{array}\right .
\end{equation}
which is not difficult to check that remains a coupling function for
$p$.
As a corollary to Theorem \ref{co}, by collecting together the two
previous remarks, we can construct a backward coalescence time (and
thus a perfect simulation algorithm) for some interesting class of
kernels $p$. Notice that the last condition in the following
corollary has the purpose of ensuring  that $a_0=0$.

\begin{corollary}\label{randwalk} Suppose that $p: G \times
G^{-\mathbf{N}_+}\to [0,1]$ is a kernel on the finite state space
$G$. Define the oriented graph $\mathcal{G}$ with set of vertices
$G$ and the set of arcs $ \mathcal{A}=\{(w,g)\in G^2: a_1(g|w)>0\}$.
Suppose
\begin{itemize}
    \item [(i)] $\sum_{n=1}^{\infty} \prod_{m=1}^{n} a_m = \infty $;
    \item [(ii)] $\mathcal{G}$ has a single irreducible class which is
    aperiodic;
    \item [(iii)] for any $g \in G$, there exists $w \in G$ such that
    $(w,g) \notin \mathcal{A}$.
\end{itemize}
Then it is possible to construct a backward coalescence time for the
coupling function $\hat f$.
\end{corollary}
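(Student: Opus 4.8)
The plan is to deduce the statement from Theorem \ref{co} applied to the coupling function $\hat f$ of (\ref{vector}), followed by the (unnamed) Proposition immediately preceding that theorem, which guarantees that an a.s. finite $\tau_0$ is automatically a backward coalescence time. The first observation is that hypothesis (i) of the corollary is \emph{verbatim} hypothesis (i) of Theorem \ref{co}: the quantities $a_m$ are attached to the kernel $p$ alone and are therefore unaffected by the replacement of $f$ with $\hat f$. The second observation is that hypothesis (iii) forces $a_0=0$; indeed, since $G$ is finite one has $a_0(g)=\min_{w\in G}a_1(g|w)$, and (iii) furnishes for each $g$ a state $w$ with $(w,g)\notin\mathcal A$, i.e. $a_1(g|w)=0$, whence $a_0(g)=0$ for every $g$ and $a_0=\sum_{g\in G}a_0(g)=0$. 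This places us squarely in the regime of this section, in which no information depth can by itself produce a backward coalescence time.

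The core of the argument is the verification of hypothesis (ii) of Theorem \ref{co}, namely the existence of an integer $s$ with $P((U_{s-1},\dots,U_0)\in E_s)>0$, where $E_s$ is the coalescence event (\ref{coup}) of the markovian regime of $\hat f$. In that regime (the coordinate $u^0<a_1$) the function (\ref{vector}) updates the current state $w$ through the independent coordinate $u^{w}$, so the induced transition law is the Markov kernel $\bar M$, and by (\ref{graph}) its support graph is exactly $\mathcal G$, i.e. $\bar M(g|w)>0\Leftrightarrow(w,g)\in\mathcal A$. Hypothesis (ii) of the corollary therefore states precisely that $\bar M$ has a single aperiodic irreducible class. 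Because $\hat f$ updates distinct states through distinct independent coordinates, it is the ``move independently before merging'' modification, which is the hypothesis of Proposition 8.1 p.~122 in \cite{AG}; that result yields backward coalescence with probability one, so $P(\bigcup_s E_s)=1$. Since the events $E_s$ are nondecreasing in $s$ (coalescence attained on a window is preserved on every longer window), some $E_s$ must carry positive probability, which is exactly hypothesis (ii). With both hypotheses in force, Theorem \ref{co} gives the a.s. finiteness of $\tau_0$ from (\ref{b}), and the preceding Proposition upgrades this to the assertion that $\tau_0$ is a backward coalescence time for $\hat f$.

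I expect the only genuinely delicate point to be the bookkeeping that lets the scalar machinery of Theorem \ref{co} run on the vector-valued sources $U_n\in[0,1)^{|G|+1}$ demanded by $\hat f$. The information depth and the markovian-regime indicator must both be read off the first coordinate: one sets $K(U_n^0)$ for the information depth defined in (\ref{kappa}) and identifies $\{U_n^0<a_1\}=\{K(U_n^0)=1\}$ (using $a_0=0$) as the markovian regime, while the coalescence event $E_s$ is driven by the remaining coordinates $U_n^{g}$. Once this identification is in place, the renewal/regeneration decomposition in the proof of Theorem \ref{co} and its closing appeal to the law of large numbers transfer without change, and it remains only to check the two routine facts used above: that $\hat f$ is a coupling function for $p$ inducing the kernel $\bar M$ on each state, and that its coordinatewise independence is precisely the structure required by \cite{AG}.
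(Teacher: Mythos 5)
Your argument is correct and is essentially the paper's own: the paper gives no formal proof of Corollary \ref{randwalk}, justifying it instead by ``collecting together the two previous remarks'' --- namely that $\bar f$ realizes (\ref{graph}) so the support of the induced Markov kernel is exactly $\mathcal{G}$, that $\hat f$ is the independent-coordinates modification to which Proposition 8.1 of \cite{AG} applies under hypothesis (ii), giving $P(E_s)>0$ for some $s$, and that (iii) forces $a_0=0$ --- and then invoking Theorem \ref{co}. Your write-up follows precisely this route, and your closing remark on reading the information depth and the markovian-regime indicator off the first coordinate of the vector-valued source is the same bookkeeping the paper dispatches with its ``replace intervals by hypercubes'' comment.
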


\begin{example} The previous result covers some \emph{generalized random walks} on a
finite directed graph $\mathcal{G}=(G,\mathcal{A})$. Before defining
this kind of processes, we define the set of one-sided infinite
paths in $\mathcal{G}$
$$
\mathcal{C}=\{\mathbf{w}^{-1}_{-\infty} \in G^{-\mathbf{N}_+}:
(w_{-(k+1)},w_{-k}) \in \mathcal{A}, k \in \mathbf{N}_+\}.
$$
Generalized random walks on $\mathcal{G}=(G,\mathcal{A})$ are
processes compatible with a kernel $p$ over the alphabet $G$ with
the properties:
\begin{itemize}
\item if $(g,w)\notin \mathcal{A}$ then, for all
    $\mathbf{w}^{-1}_{-\infty} \in G^{-\mathbf{N}_+}$ with
    $w_{-1}=w$, $p(g|\mathbf{w}^{-1}_{-\infty})=0$;
    \item if $(g,w)\in \mathcal{A}$, there exists $\epsilon>0$ s.t.
    for all $\mathbf{w}^{-1}_{-\infty} \in \mathcal{C}$
    with $w_{-1}=w$, $ p(g|\mathbf{w}^{-1}_{-\infty})>\epsilon$.
\end{itemize}
The first property implies that $\mathcal{H}\subset \mathcal{C}$,
whereas the second ensures the opposite inclusion. Moreover
$$
a_1(g|w)=\inf \left\{
p(g|\mathbf{w}^{-1}_{-\infty}):\mathbf{w}^{-1}_{-\infty} \in
\mathcal{C}, w_{-1}=w \right\}>\epsilon>0,
$$
if $(g,w)\in \mathcal{A}$ is an arc of $\mathcal{G}$, otherwise it
is clearly $a_1(g|w)=0$. Thus we can get the set $\mathcal{A}$ from
the kernel $p$ as indicated in Corollary \ref{randwalk}. Therefore
if the graph $\mathcal{G}$ satisfies conditions (ii) and (iii) and
the sequence $\{a_k, k \in \mathbf{N}_+\}$ satisfies condition (i),
the previous Corollary allows to prove the existence and uniqueness
of the generalized random walk, and the feasibility of a perfect
simulation algorithm for sampling it.
\end{example}


\comment{
\begin{example}
If $(G, \preceq)$ is a finite partially ordered set with a maximum
and a minimum element, say $Q$ and $q$ respectively, the assumption
(ii) of Theorem \ref{co} is easily verifiable when the maximal
coupling function in the markovian regime $\tilde f(u|w)$ is
monotone w.r.t. $w \in G$, for any $u \in [0,1)$. In fact, if
$\tilde f$ is monotone, then for any $u_i \in [0,1),
i=0,\ldots,n-1$, $\tilde{f}^{(n)} (u_{n-1}, \dots, u_{0}|w)$ is
monotone w.r.t. $w \in G$, which implies
\begin{equation}\label{monotone}
\tilde{f}^{(n)} ( u_{n-1}, \dots, u_{0}|q) \preceq \tilde{f}^{(n)}
(u_{n-1}, \dots, u_{0}|w ) \preceq \tilde{f}^{(n)} ( u_{n-1}, \dots,
u_{0}|Q ),\,\,\,\, n \in \mathbf{N},
\end{equation}
for any $w \in G$.
Next define
\begin{equation}  \label{coup2}
O_n = \{(u_0,\ldots,u_{n-1}) \in [0,1)^n:\, \tilde{f}^{(n)} ( u_0 ,
\dots, u_{-n+1} | q ) = \tilde{f}^{(n)} ( u_0 , \dots, u_{-n+1} | Q
)\}, \,\,\,\,\, n \geq 1.
\end{equation}
Despite the fact that in order to check if $O_n$ is realized, one
needs to follow only the trajectories starting from $q$ and $Q$, it
is immediately seen from (\ref{monotone}) that $O_n=E_n$. If
$\tilde{f}$ is monotone and induces an irreducible Markov kernel
$M$, then there exists an integer $n \in \mathbf{N}_+$ such that the
probability of going from $Q$ to $q$ in $n$ steps is positive, an
event which implies $O_n$. Thus $O_n$ has a positive probability as
well \cite{AG, BB}.

We finally warn the reader that the desired monotonicity property of
$\tilde f$ is not ensured by the stochastic monotonicity of the
kernel $p$, since the maximal coupling function itself does not
preserve this property. }
 \comment{A sufficient condition is that
the specification $P (\cdot |w_{-1}, \zeta^{-2}_{-\infty}) $ depends
monotonically on $ w_{-1}\in G$, with respect to the natural order
defined in $G $ as a subset of $ \mathbf{N}$, for any given sequence
$ \zeta^{-2}_{-\infty} \in G^{-\mathbf{N}_+}$. This means
\begin{equation}\label{sommainf}
 \sum_{j =0}^{i} P(j | k ,\zeta^{-2}_{-\infty} ) \geq
  \sum_{j =0}^{i} P(j | l ,\zeta^{-2}_{-\infty} )   \,\,\,  i =0 ,
  \ldots, n
\end{equation}
for any $ k \leq l $, and any given $ \zeta^{-2}_{-\infty} \in
G^{-\mathbf{N}_+}$. By taking the infimum at both sides in
(\ref{sommainf}) with respect to $ \zeta^{-2}_{-\infty}  \in
G^{-\mathbf{N}_+}$}

\comment{====================================================================
However,
the successfulness depends on the coupling function $\tilde{f}$; in
the following example sufficient conditions are easily obtained.

Let $G=\{0,1,2\}$, which is an additive group w.r.t. the sum mod
$3$. We consider any nearest neighbour random walk on $G$, with
\begin{equation}  \label{rw}
a_1(g|g\pm1 mod 3)>0, g=1,2,3,
\end{equation}
and suppose that $\sum_{n=1}^{\infty} \prod_{m=1}^{n} a_m = \infty$.
Then it is easily seen that the second condition in (\ref{conver})
is satisfied. In fact there is a positive probability that in two
steps all the three states of the chain coalesce.

In general there is a modification of $\tilde{f}$ which turns out to
be always successful.

Let us define $h : [ 0, a_1) \to [ 0, a_1)^G $ to be a function
mapping the one-dimensional Lebesgue measure into the $|G |
$-dimensional Lebesgue measure, properly scaled. Thus $h $ applied
to a uniformly distributed r.v. in $[ 0, a^*_1)$ produces $|G |$
independent copies with the same distribution (for the construction
see Williams \cite{Wi}). Next replace the coupling function $f $
with $f^* $ defined in the following way
\begin{equation}  \label{nuovo}
f^*( u| \mathbf{w}^0_{-\infty}) = \left \{
\begin{array}{ll} \tilde{f}([h(\frac {u}{a^*_1})]_{w_0}, w_0 ) & u < a^*_1, \\
f( u| \mathbf{w}^0_{-\infty}) & u \geq a^*_1
 .
\end{array}
\right .
\end{equation}
It is immediately seen that the kernel induced by $f^* $, when applied to
uniformly distributed r.v. in $[0,1]$ is unchanged. Accordingly we call $%
E^*_n $ the event which is obtained by replacing $f $ by $f^* $ in
definition (\ref{coup}) of $E_n $. By Proposition 8.1 p. 122 in
\cite{AG} it is readily obtained that $\lim_{n\to \infty} P(E^* _n)
=1 $. }

The result of this section can be extended to cover the case $ a_1=
\dots = a_l =0, a_{l+1}>0 $, for some $l \geq 1$. In this case the
maximal coupling function depends on at least $l+1$ variables hence
it induces a markovian kernel $M$ on the state space $G^{l+1}$. The
changes to the statement of Theorem \ref{co} are rather
straightforward.

\bigskip
\bibliographystyle{plain}

\end{document}